\newcommand{\dlpullback}[1][dl]{\save*!/#1-4ex/#1:(-1,1)@^{|-}\restore}
\newcommand{\drpullback}[1][dr]{\save*!/#1-4ex/#1:(-1,1)@^{|-}\restore}
\newtheoremstyle{mystyle}% name
  {}% Space above
  {}% Space below
  {\itshape}% Body font
  {\parindent}% Indent amount
  {\bfseries}% Theorem head font
  {.}%Punctuation after theorem head
  {.5em}%Space after theorem head
  {}% theorem head spec
\theoremstyle{mystyle}
\newtheorem{prop}[subsection]{Proposition}
\newtheorem{lemma}[subsection]{Lemma}
\newtheorem{cor}[subsection]{Corollary}
\theoremstyle{remark}
\newtheorem{taller}[subsection]{$\!\!$}
\newenvironment{blanko}[1]%
{\begin{taller}{\normalfont\bfseries  #1}\normalfont}%
{\end{taller}}
\newcommand{\ourX}{D}
\newcommand{\simplexcategory}{\triangle\!}
\newcommand{\kat}[1]{\text{\textbf{\textsl{#1}}}}
\newcommand{\Set}{\kat{Set}}
\newcommand{\Cat}{\kat{Cat}}
\newcommand{\Grpd}{\kat{Grpd}}
\newcommand{\grpd}{\kat{grpd}}
\newcommand{\relfin}{\mathrm{rel.fin.}}
\newcommand{\Id}{\operatorname{Id}}
\newcommand{\Aut}{\operatorname{Aut}}
\newcommand{\Map}{\operatorname{Map}}
\newcommand{\upperstar}{^{\raisebox{-0.25ex}[0ex][0ex]{\(\ast\)}}}
\newcommand{\lowerstar}{_{\raisebox{-0.33ex}[-0.5ex][0ex]{\(\ast\)}}}
\newcommand{\lowershriek}{_!}
\newcommand{\tensor}{\otimes}
\newcommand{\op}{^{\text{{\rm{op}}}}}
\newcommand{\Q}{\mathbb{Q}}
\newcommand{\N}{\mathbb{N}}
\newcommand{\B}{\mathbb{B}}
\newcommand{\C}{\mathbb{C}}
\newcommand{\T}{\mathbb{T}}
\newcommand{\G}{\mathbb{G}}
\providecommand{\norm}[1]{\left| {#1}\right|}
\newcommand{\RRR}{\mathsf{R}}
\newcommand{\SSS}{\mathsf{S}}
\newcommand{\name}[1]{\ulcorner #1 \urcorner}
\newcommand{\LIN}{\kat{LIN}}
\begin{document}

\title
[Fa\`a di Bruno for operads]
{Fa\`a di Bruno for operads and internal algebras
}

\author{Joachim Kock}
\address{Departament de matem\`atiques, Universitat Aut\`onoma de Barcelona}
\email{kock@mat.uab.cat}
\author{Mark Weber}
\address{Institute of Mathematics of the Czech Academy of Sciences}
\email{mark.weber.math@gmail.com}

\subjclass[2010]{16T10, 05A19, 18D50, 18C15, 18B40, 32A05, 57T30, 18G30}

%%%%%%%%%%%%%%%%%%%%%%%%%%%%%%%%%%%%%%%%%%%%%%%%%%
\begin{abstract}
  For any coloured operad $\RRR$, we prove a Fa\`a di Bruno formula for the
  `connected Green function' in the incidence bialgebra of $\RRR$.
  This generalises on one hand the classical Fa\`a di Bruno formula (dual to
  composition of power series), corresponding to the case where $\RRR$ is the
  terminal reduced operad, and on the other hand the Fa\`a di Bruno formula for
  $P$-trees of G\'alvez--Kock--Tonks ($P$ a finitary polynomial endofunctor),
  which corresponds to the case where $\RRR$ is the free operad on $P$.  Following
  G\'alvez--Kock--Tonks, we work at the objective level of groupoid slices, hence
  all proofs are `bijective': the formula is established as the homotopy
  cardinality of an explicit equivalence of groupoids.  
  In fact we establish the formula more generally in a relative situation, for
  algebras for one polynomial monad internal to another.  This covers in 
  particular
  nonsymmetric operads (for which the terminal reduced case yields the
  noncommutative Fa\`a di Bruno formula of Brouder--Frabetti--Krattenthaler).
\end{abstract}

\maketitle
\tableofcontents
\setcounter{section}{-1}

%%%%%%%%%%%%%%%%%%%%%%%%%%%%%%%%%%%%%%%%%%%%%%%%%%
\section{Background}
%%%%%%%%%%%%%%%%%%%%%%%%%%%%%%%%%%%%%%%%%%%%%%%%%%

The Fa\`a di Bruno formula computes the coefficients of the composite of
two formal power series without constant terms.  For
$$
f(z)= \sum_{n=1}^\infty f_n \frac{z^n}{n!} \qquad \text{ and } \qquad 
g(z)= \sum_{n=1}^\infty g_n \frac{z^n}{n!}
$$
the composite series
$$
 (g\circ f)(z) =: \sum_{n=1}^\infty h_n \frac{z^n}{n!}
\qquad \text{ has } \qquad
h_n = \sum_{k=1}^n B_{n,k}(f_1,f_2,\ldots) \cdot g_k
$$
where $B_{n,k}(x_1,x_2,x_3,\ldots)$ are the partial Bell polynomials,
whose coefficient of a monomial $x_1^{\lambda_1} x_2^{\lambda_2} \cdots 
x_n^{\lambda_n}$ counts the number of partitions of an $n$-element
set into $k$ blocks: $\lambda_1$ blocks of size $1$, $\lambda_2$ blocks of size $2$, etc.
($\sum_i \lambda_i = k$ and $\sum_i i \lambda_i = n$).

% 
% 
% 
% \bigskip
% 
% 
% 
% The coefficients in the formula involves the Bell polynomials, 
% deriving from the combinatorics of lattices of set partitions.
We shall not actually need this classical formulation, for which we refer to
the survey of Figueroa and Gracia-Bond\'ia~\cite{Figueroa-GraciaBondia:0408145}.
An excellent historical account of the formula 
is due to Johnson~\cite{Johnson:MR1903577}.

Our starting point is instead an elegant coalgebraic rendition of the formula,
which, as far as we know, was first noticed by Brouder, Frabetti and
Krattenthaler~\cite{Brouder-Frabetti-Krattenthaler:0406117}, inspired by
constructions in perturbative quantum field theory.  Let $A_k$ denote the linear
functional on the vector space of power series that takes $f$ to $f^{(k)}(0)$,
i.e.~returns the coefficient of $z^k/k!$.  The {\em Fa\`a di Bruno bialgebra} is the
polynomial ring
$\C[A_1,A_2,A_3,\ldots]$ with comultiplication dual to the
monoid structure of composition of power series. Doubilet~\cite{Doubilet:1974} proved
that the Fa\`a di Bruno bialgebra is the reduced incidence bialgebra of the 
lattice of set partitions, and Joyal~\cite{Joyal:1981}
observed that it can also be obtained directly
(without a reduction step) from the category of surjections.

Brouder, Frabetti and
Krattenthaler introduce the infinite series
$$
A = \sum_{k=1}^\infty A_k / k!  \in \C[[A_1,A_2,A_3,\ldots]],
$$
check that the comultiplication extends to the power-series ring, and show that 
the Fa\`a di Bruno
formula can be formulated succinctly as
\begin{equation}\label{eq:FdB}
  \Delta(A) = \sum_{k=1}^\infty A^k \tensor A_k/k!
\end{equation}
(The exponent $k$ is 
% really 
$k$th power in the ring.)
The individual coefficients can be extracted from this formula.

This form of the Fa\`a di Bruno formula is of importance in quantum field
theory, where the role of the series $A$ is played by the {\em connected Green
function} \cite{Bergbauer-Kreimer:0506190}, defined in the Connes--Kreimer Hopf
algebra of Feynman graphs \cite{Connes-Kreimer:9912092} as the sum of all 
connected graphs
divided by their symmetry factors.  Van Suijlekom~\cite{vanSuijlekom:0807} established a
(multivariate) Fa\`a di Bruno formula for the connected Green function, thereby
vindicating the relevance of the Hopf algebra of graphs also in non-perturbative
QFT.
% indeed, unlike the individual graphs the Green function has a non-perturbative
% physical meaning.
The coalgebraic Fa\`a di Bruno formula \eqref{eq:FdB}
has also been exploited in the so-called
exponential renormalisation~\cite{EbrahimiFard-Patras:1003.1679}.  Inspired by
van Suijlekom's result, G\'alvez, Kock and
Tonks~\cite{GalvezCarrillo-Kock-Tonks:1207.6404} proved a general Fa\`a di Bruno
formula in bialgebras of $P$-trees ($P$ a finitary polynomial endofunctor),
introducing categorical and homotopical methods which we further exploit in the
present contribution to prove a Fa\`a di Bruno formula in a much more general
setting.

Beyond calculus, combinatorics, and classical applications to probability 
theory (for the latter, see for example \cite{Kendall:vol1}), 
Fa\`a di Bruno-type formulae pop
up in various contexts in the mathematical 
sciences.  Most prominently perhaps in algebraic topology, in connection with
complex cobordism~\cite{
% Morava:Adams-memorial,
Morava:Oaxtepec} and vertex operator algebras~\cite{Robinson:0903.3391},
but also in areas such as
% areas of mathematics, prominently in quantum field theory
% in connection with Hopf-algebraic 
% renormalisation~\cite{Connes-Kreimer:9808042}, \cite{Connes-Kreimer:9912092},
% \cite{Bergbauer-Kreimer:0506190}, \cite{vanSuijlekom:0807},
control theory 
\cite{DuffautEspinosa-EbrahimiFard-Gray:1406.5396, Gray-DuffautEspinosa:MR2849486},
% in connection with Fliess operators in feedback systems
population genetics~\cite{Hoppe2008543},
and differential linear logic~\cite{Cockett-Seely:FdB}, to mention a few that 
have come to our attention.
We do not know whether our contribution can be of any relevance in these 
contexts.

\normalsize

%%%%%%%%%%%%%%%%%%%%%%%%%%%%%%%%%%%%%%%%%%%%%%%%%%
\section{Outline of results and proof ingredients}
%%%%%%%%%%%%%%%%%%%%%%%%%%%%%%%%%%%%%%%%%%%%%%%%%%

\begin{blanko}{Heuristic outline.}
  Given an operad $\RRR$ (satisfying finiteness conditions, cf.~\ref{fin-cond} 
  below), one can form its incidence bialgebra:  as an 
  algebra it is the polynomial ring in
  the set of iso-classes of operations of $\RRR$.  The monomials are interpreted as
  formal disjoint unions of operations.  The comultiplication is the crucial
  structure: an operation is comultiplied by summing over all ways the operation
  can arise by operad substitution from a collection of operations fed into a
  single operation:
  $$
  \Delta(r) = \sum_{r=b \circ (a_1,\ldots,a_n)}  a_1\cdots a_n \tensor b  .
  $$
  (The sum is over iso-classes of factorisations---to be technically correct it
  should rather be a homotopy sum, as will be detailed.)
  The comultiplication is extended multiplicatively to the
  whole polynomial ring, which therefore becomes a bialgebra, the {\em incidence
  bialgebra} of the operad~\cite{GKT:ex}.
  
  Inside the completion of this bialgebra (the power series ring), we now define
  the {\em connected Green function} (by analogy with quantum 
  field theory) to be the series consisting of all the operations
  themselves (but not their disjoint unions---this is the meaning of the word 
  `connected'), divided by their symmetry factors.
  This series $G$ can be written as an infinite sum
  $$
  G = \sum_n g_n
  $$
  where $g_n$ consists of all the operations of arity $n$ ($n$ a sequence of 
  colours), divided by their 
  symmetry factors.  The
  comultiplication is shown to extend to the power series ring, and we can
  now state our general Fa\`a di Bruno formula:
  
  \bigskip
  
  \noindent {\bf Main Theorem.} (Cf.~\ref{thm:main}.)
  $$
  \Delta(G) = \sum_k G^k \tensor g_k .
  $$ 
  
  \bigskip
  
  The two extreme examples of this construction are the following.
  When $\RRR$ is the terminal reduced operad $\mathsf{Comm}_+$ (i.e.~no nullary 
  operations, and a single $n$-ary operation for each $n>0$),
  then the formula is the classical \eqref{eq:FdB} (cf.~Example~\ref{ex:clas}),
  with $g_k$ denoting the unique
  operation in arity $k$ (divided by $k!$), corresponding to $A_k/k!$.
  When $\RRR$ is the free operad on a finitary polynomial 
  endofunctor $P$, then the operations are the $P$-trees, and $g_n$ is given by 
  iso-classes of $P$-trees with $n$ leaves.  The resulting Fa\`a di Bruno 
  formula in this case is that of 
  G\'alvez--Kock--Tonks~\cite{GalvezCarrillo-Kock-Tonks:1207.6404} 
  (cf.~Example~\ref{ex:free}).
  More examples are given in Section~\ref{sec:ex}.
\end{blanko}

\begin{blanko}{Formalisation.}
  It is likely that the formula could be proved by hand, just by expansion of
  series and brute computation.  The difficulty in that approach is to handle
  correctly the symmetry factors that appear, as well illustrated by the
  computations of van Suijlekom \cite{vanSuijlekom:0610137, vanSuijlekom:0807}.
  The insight of
  \cite{GalvezCarrillo-Kock-Tonks:1207.6404} in the case of trees was that the
  formula can be realised as the homotopy cardinality of an equivalence of
  groupoids (hence constituting a bijective proof), and that at the groupoid level all the symmetry factors take care
  of themselves automatically. 
%   At the same time this makes the proof bijective,
%   in the sense that the identity is realised as the homotopy cardinality of an 
%   equivalence of groupoids.
  The actual equivalence established in
  \cite{GalvezCarrillo-Kock-Tonks:1207.6404} involves groupoids of trees and
  trees with a cut, relying on
%   exploiting
  specifics of the combinatorics of the
  Butcher--Connes--Kreimer bialgebra of trees.
  
  The present contribution exploits the objective method initiated in
  \cite{GalvezCarrillo-Kock-Tonks:1207.6404}, establishing the Fa\`a di Bruno
  formula as the homotopy cardinality of an equivalence of groupoids, but takes
  a further abstraction step, which leads to a more general formula and a much
  simpler proof.  We achieve this by leveraging some recent advances in category
  theory: on one hand, $2$-categorical perspectives on operads and related structures
  discussed in \cite{Weber-OpPoly2Mnd, Weber-CodescCrIntCat, Weber:Tbilisi},
  and on the other hand, the theory of decomposition
  spaces~\cite{Galvez-Kock-Tonks:1512.07573,Galvez-Kock-Tonks:1512.07577}.  With
  these tools, the proof of the equivalence of groupoids ends up being
  rather neat, emerging naturally from general principles.
\end{blanko}

\begin{blanko}{The objective method in a nutshell: groupoid slices instead of vector spaces.}
%   We briefly describe the ideas of realising combinatorial identities as
%   homotopy cardinalities of groupoid equivalences, referring to refs CITE{}
%   for further details.  While algebraic methods are very powerful tools in
%   combinatorics, 
  It is well appreciated in combinatorics that bijective proofs represent
  deeper insight than formal algebraic ones.  From the viewpoint of category
  theory, this deeper insight typically involves universal properties.  Algebraic
  objects associated to combinatorial structures often have underlying vector
  spaces, generated freely by isomorphism classes of the combinatorial objects.
  It is a general hypothesis that whenever linear combinations involve
  symmetries---as exemplified in quantum field theory where formal sums 
  of Feynman graphs are
  weighted by inverses of symmetry factors---they arise as homotopy
  cardinalities of groupoids rather than just cardinalities of sets; more
  precisely, as homotopy cardinalities of homotopy sums, as we recall below.
  Accordingly, algebraic identities in such situations should reflect
  equivalences of groupoids rather than bijections of sets.
  
  A systematic way of expressing algebraic identities objectively is
  to replace vector spaces by slice categories.  If $B$ is a groupoid of 
  combinatorial objects, then the basic vector space of interest is the free vector
  space on $\pi_0 B$, the set of iso-classes of objects in $B$.
  Just as a vector is a formal linear combination of elements 
  in $\pi_0 B$ (i.e.~a family of scalars indexed by $\pi_0 B$), an element $p:X\to B$ in the
  slice category $\Grpd_{/B}$ is a collection of groupoids indexed by $B$,
  the members of the family being the (homotopy) fibres $X_b$.  Similarly,
  linear maps are represented by `linear functors', in turn given by spans of 
  groupoids.  The ordinary linear algebra is obtained by taking the homotopy cardinality of
  the groupoid-level `linear algebra' \cite{Galvez-Kock-Tonks:1602.05082}.
%   The homotopy 
%   cardinality of a homotopy-finite groupoid $X$ is defined as
%   $$
%   \norm{X} := \sum_{x\in \pi_0 X} \frac{1}{\norm{\Aut{x}}} .
%   $$
  The homotopy cardinality of a family $p:X\to B$ in $\Grpd_{/B}$ is defined as
  $$
  \norm{p} = \sum_{b\in \pi_0 B} \frac{\norm{X_b}}{\norm{\Aut{b}}} \; \delta_b
  $$
  which is an element in the vector space $\Q_{\pi_0 B}$ spanned by the symbols
  $\delta_b$, one for each iso-class of objects in $B$.
%   
%   The appearance of inverse symmetry factors can often be explained as a
%   homotopy cardinality in a groupoid slice.
\end{blanko}

\begin{blanko}{Brief explanation of the groupoid equivalence.}
  To explain the equivalence briefly, for simplicity we take the case where $\RRR$ is 
  single-coloured, and gloss over a few technical details.
  The basic vector space is that spanned by the monomials
  in the (iso-classes of) operations of the operad $\RRR$.  Accordingly, we shall work 
  with the groupoid $\ourX_1 = \SSS C_1$, the free symmetric monoidal category on the 
  groupoid $C_1$ of operations of $\RRR$; more precisely, $C_1$ is the action 
  groupoid (i.e.~homotopy quotient) for the action of the symmetric groups on the sets of operations.
  So the basic slice is $\Grpd_{/\ourX_1}$ whose cardinality is the 
  vector space
  $\Q_{\pi_0 \ourX_1}$ spanned by $\pi_0 \ourX_1$.
  The {\em connected Green function} $G$ is the formal series defined as the homotopy 
  cardinality of the object $C_1 \to \ourX_1$ in
  $\Grpd_{/\ourX_1}$.  The comultiplication map $\Delta$ is the cardinality of a
  certain span~\cite{Galvez-Kock-Tonks:1512.07573},
  $$
  \ourX_1 \stackrel{d_1}\longleftarrow \ourX_2 \stackrel{(d_2,d_0)}\longrightarrow \ourX_1 \times \ourX_1
  $$
  whose maps refer to a simplicial groupoid $\ourX_\bullet:\simplexcategory\op\to \Grpd$ 
  canonically constructed from the operad as a certain relative (two-sided) bar
  construction (cf.~Section~\ref{sec:bar}).  The Fa\`a di Bruno
  formula, which at the algebraic level is an equation in the vector space
  $\Q_{\pi_0 \ourX_1} \tensor \Q_{\pi_0 \ourX_1}$, is therefore supposed to be the
  cardinality of an equivalence of groupoids over $\ourX_1 \times \ourX_1$.  Here is the
  equivalence (established in Proposition~\ref{prop:equiv}):
  $$\xymatrix @!C=24pt {
     C_1 \times_{\ourX_1} \ourX_2  \ar[dr] &\simeq & \hspace{12pt}\int^k (\ourX_1)_k \times {}_k(C_1) \ar[ld] \\
     & \ourX_1 \times \ourX_1   . &
  }$$
  The left-hand side is precisely the definition of the comultiplication of the
  connected Green function $C_1 \to \ourX_1$, so its homotopy cardinality is
  $\Delta(G)$.  The right-hand side is a groupoid $\ourX_1 \times_{\ourX_0}
  C_1$, written as the homotopy sum of its homotopy fibres over $\ourX_0$, the
  groupoid of finite sets and bijections, and it has to be unravelled a bit:
  $(\ourX_1)_k$ denotes the $k$-component of the groupoid of families of
  operations, meaning those families that have $k$ members.  This is the same
  thing as $k$-tuples of operations, $(\ourX_1)_k \simeq (C_1)^k$, whose
  homotopy cardinality is precisely $G^k$.  The symbol ${}_k (C_1)$ denotes the
  homotopy fibre of $C_1$ over $k$ under the arity map, so it is the groupoid of
  $k$-ary operations, and maps fixing the input slots.  The integral sign
  designates a homotopy sum \cite{Carlier-Kock, Galvez-Kock-Tonks:1602.05082},
  $$
  \int^k ( \ ) = \sum_k \frac{( \ )}{\Aut{k}}
  $$
  where the division bar denotes homotopy quotient under the action of $\Aut(k)$,
  acting diagonally on $(\ourX_1)_k \times {}_k(C_1)$.  Since the action on the
  first factor is free, the action can be passed to the second factor, and the
  effect of it is to add morphisms to the groupoid ${}_k(C_1)$ to allow  
  permutation of the inputs.  Altogether, $\frac{{}_k(C_1)}{\Aut(k)}$ is the full
  groupoid of $k$-ary operations, and its homotopy cardinality is precisely $g_k$,
  showing that altogether the right-hand side of the equivalence has homotopy 
  cardinality
  $$
  \sum_k G^k \tensor g_k
  $$
  as claimed.
  
  The task is now to define the involved groupoids $\ourX_i$ and $C_i$ 
  correctly.
  It is a pleasing aspect of our approach that these groupoids come about by 
  the standard general construction in algebraic topology
  known as the relative (two-sided) bar construction.
\end{blanko}

\begin{blanko}{Finiteness conditions.}\label{fin-cond}
  At the groupoid level, in the `objective' bialgebra $\Grpd_{/\ourX_1}$, the Fa\`a
  di Bruno formula holds for {\em any} operad.  However, in order to be able to
  take cardinality, a certain finiteness condition must be imposed
  \cite{Galvez-Kock-Tonks:1512.07577} (which is automatic in the two previously
  known cases, when either $\RRR$ is $\mathsf{Comm}_+$ or free on a polynomial
  endofunctor).  Namely, an
  operad $\RRR$ is called {\em locally finite} when for each operation $r$, the
  groupoid of possible decompositions $r = b\circ (a_1,\ldots,a_n)$ is homotopy
  finite.  Equivalently, the map $d_1: \ourX_2 \to \ourX_1$ is homotopy finite.
% We also need to require that $s_0 : \ourX_0 \to \ourX_1$ be finite.
% Maybe this is automatic, or we should require it even to be a mono?!?
% (For example it will be satisfied if $\RRR$ is Rezk complete, meaning that the
% only invertible operations are the identity operations.)
\end{blanko}

\begin{blanko}{Remark on grading and Hopf versus bialgebras.}\label{rmk:grading}
  The classical Fa\`a di Bruno bialgebra is naturally graded, with $\deg A_k = k-1$. 
%   This is the natural operadic grading: number-of-inputs-minus-number-of-outputs 
%   \cite{GalvezCarrillo-Kock-Tonks:1207.6404}, \cite{Kock:1512.03027}.
  Because of this minus-one, a shifted indexing convention is often used in the
  literature \cite{Brouder-Frabetti-Krattenthaler:0406117, EbraihimiFard-Lundervold-Manchon:1402.4761, vanSuijlekom:0610137, vanSuijlekom:0807}, writing $A_{k-1}$
  instead of $A_k$.  The present convention (also that of
  \cite{GalvezCarrillo-Kock-Tonks:1207.6404} and \cite{Kock:1512.03027}) is
  dictated by the operadic approach, where it is essential that the superscript
  on $G^k$ (counting $k$ outputs) matches the subscript on $g_k$ (counting $k$
  inputs).
  
  It is also worth noting that it is essential to work with bialgebras rather
  than Hopf algebras.  Our bialgebras are not connected, as in degree zero they
  are certain free symmetric monoidal categories, such as in the classical case
  the groupoid of finite sets and bijections (to which $k$ belongs).  Hopf
  algebras can be obtained by collapsing degree zero,
%   (this is possible since degree zero is always spanned 
%   by grouplike elements~\cite{Kock:1411.3098}),
%   NO IT IS NOT!
  but this amounts to throwing away the data controlling
  the match, as just described.
\end{blanko}

\begin{blanko}{Generalisations.}
  The arguments actually work the same for any situation $\RRR \Rightarrow
  \SSS$, of two polynomial monads, one cartesian over the other, but possibly
  over different slices.  The construction also works for $\RRR$-algebras
  rather than for $\RRR$ itself (which is actually the case of the terminal
  $\RRR$-algebra).
  We describe these generalisations in
  Section~\ref{sec:generalised}. 
  
  In the operad case, $\SSS$ is the
  symmetric monoidal category monad.  The comonoid structure comes from the
  combinatorics of $\RRR$, and it will (almost) always be noncocommutative, as a
  consequence of the fact that operads have many inputs but only one output.
  The algebra structure, which is always free, is of a different nature,
  deriving from the fact that operads are considered internal to symmetric
  monoidal categories, by the choice of $\SSS$.  For general $\SSS$, the outcome
  will not exactly be a bialgebra, but rather a free $\SSS$-algebra in the
  category of coalgebras.  Some care is needed to interpret the Fa\`a di Bruno
  formula correctly in this more general setting, described in
  Section~\ref{sec:generalised}.
\end{blanko}

% \tiny
% 
% Having outlined the ideas, we can now proceed to construct the involved groupoids
% formally and establish the equivalence.  The beauty of the constructions lies in
% the fact that these groupoids are constructed canonically, and that the main
% equivalence follows essentially from universal properties.
% 
% 
% \normalsize

%%%%%%%%%%%%%%%%%%%%%%%%%%%%%%%%%%%%%%%%%%%%%%%%%%
\section{Operads and polynomial monads}
%%%%%%%%%%%%%%%%%%%%%%%%%%%%%%%%%%%%%%%%%%%%%%%%%%

\label{sec:operads}

% \begin{blanko}{Operads and polynomial monads.}
Our main interest is in operads, but it is
technically convenient to deal with them in the setting of polynomial monads,
which also leads to a natural generalisation.
The natural level of generality for our Fa\`a di Bruno formula
is that of one polynomial monad cartesian over another
$$
\RRR\Rightarrow \SSS 
$$
in a double-category sense~\cite{Fiore-Gambino-Kock:1006.0797}.
However, for expository reasons, and since it is the main case, we concentrate
on the case of operads, namely when $\SSS$ is the symmetric monoidal category
monad (\ref{S}), which we assume in Sections~\ref{sec:operads}--\ref{sec:card}.
Then in Section~\ref{sec:generalised} we explain how
everything carries over readily to the general case.

\begin{blanko}{Groupoids and homotopy sums.}\label{hosum}
  We freely use basic homotopy theory of groupoids, such as homotopy pullbacks 
  and homotopy fibres, referring to \cite{Carlier-Kock} for all details.
  Here we content ourselves to briefly review the notion of homotopy sum, since
  it is a key point, accounting for the origin of the symmetry factors,
  as first exploited in \cite{GalvezCarrillo-Kock-Tonks:1207.6404}.
  
  It is plain that for a map of sets $E \to B$, the set $E$ can be regarded as the sum of its
  fibres, $E \simeq \sum_{b\in B} E_b$.  The same is true for groupoids, 
  provided we use homotopy fibres and homotopy sums, as we now recall.
  Each homotopy fibre $E_b$ comes with a canonical map to $E$, but it is not
  fully faithful.  But the automorphism group $\Aut(b)$ acts on $E_b$ 
  canonically, and the action groupoid (also called homotopy quotient)
  $E_b/\Aut(b)$ does map to $E$ fully 
  faithfully; summing over one element $b$ for each connected component in $B$
  then yields an equivalence of groupoids
  $$
  \sum_{b \in \pi_0 B} \frac{E_b}{\Aut(b)} \simeq E .
  $$
  The left-hand side is an example of a homotopy sum, and is denoted 
  $\int^{b\in B} E_b$.  It is an instance of a homotopy colimit, indexed
  by the groupoid $B$, just as an ordinary sum is a colimit indexed by a set.
  (The integral notation is standard, and is also compatible with
  general usage in category theory, since it is also an instance of a coend.)
  The great benefit of working with homotopy sums, is that they interact with 
  homotopy pullbacks in the nicest way, precisely as ordinary sums interact
  with pullbacks in the category of sets. 
  
  The following lemma, which is a 
  straightforward
  variation of this splitting into fibres, will be crucial for the Fa\`a di 
  Bruno formula.
\end{blanko}

 \begin{lemma}\label{lem:XSY}
  Given a (homotopy) pullback square
  $$\xymatrix{
     P \drpullback \ar[r]\ar[d] & Y \ar[d] \\
     X \ar[r] & S
  }$$
  there is a natural equivalence of groupoids
  $$
  P \simeq \int^{s\in S} X_s \times Y_s .
  $$
  It is also an equivalence over $X \times Y$, meaning more precisely an
  equivalence in the weak slice $\Grpd_{/X\times Y}$.
\end{lemma}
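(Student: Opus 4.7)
The plan is to apply the fibre-splitting principle from \ref{hosum} to the composite map $P \to S$ (through either $X$ or $Y$; these agree since the square commutes) and identify the fibres using the pasting property of homotopy pullbacks.

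First, I would form the composite $P \to X \to S$, which equals $P \to Y \to S$ up to canonical equivalence. For each $s \in S$, the homotopy fibre $P_s$ can be computed by pasting: it fits into a diagram of pullbacks
$$\xymatrix{
P_s \drpullback \ar[r]\ar[d] & Y_s \drpullback \ar[r]\ar[d] & 1 \ar[d]^{s} \\
X_s \ar[r]\ar[d] & P \drpullback \ar[r]\ar[d] & Y \ar[d] \\
1 \ar[r]_{s} & X \ar[r] & S
}$$
Chasing around, $P_s$ is simultaneously the pullback of $X_s \to 1 \leftarrow Y_s$, hence $P_s \simeq X_s \times Y_s$. This is just the usual fact that fibres of a pullback are products of fibres, formulated up to homotopy.

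Next I would invoke the fibre-summing equivalence recalled in \ref{hosum}, applied to $P \to S$:
$$P \;\simeq\; \int^{s \in S} P_s \;\simeq\; \int^{s \in S} X_s \times Y_s,$$
which establishes the equivalence. For the refinement that it lives in $\Grpd_{/X\times Y}$, I would note that on the right-hand side the structure map to $X\times Y$ is induced by the canonical maps $X_s \to X$ and $Y_s \to Y$ on each summand (which assemble into $X \simeq \int^s X_s$ and $Y \simeq \int^s Y_s$), while on the left the structure map is the given $P \to X\times Y$ from the pullback square. Unfolding the pasting diagram shows these maps agree up to the canonical equivalence, so the whole thing is natural in the weak slice over $X \times Y$.

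The only point requiring any care is the compatibility of the fibre-summing equivalence with the two projections simultaneously, since the homotopy sum is taken over one copy of $S$ but the target $X\times Y$ does not mention $S$ at all. Concretely, I would check that the canonical section $X_s \times Y_s \hookrightarrow X_s/\Aut(s) \times Y_s/\Aut(s)$ (reading the homotopy sum as in \ref{hosum}) composed with the inclusions into $X$ and $Y$ reproduces the two legs of the pullback restricted to $P_s$. This is essentially bookkeeping with the $\Aut(s)$-action acting diagonally, and once organised it follows from the universal property of the pullback and the definition of homotopy sum. No deeper ingredient is needed beyond the interaction of homotopy pullback with homotopy colimits indexed by groupoids.
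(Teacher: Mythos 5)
Your argument is correct and is exactly what the paper intends: Lemma~\ref{lem:XSY} is stated there without proof, described only as ``a straightforward variation of'' the fibre-splitting recalled in \ref{hosum}, and your proof---split $P$ into homotopy fibres over $S$ and identify each fibre $P_s \simeq X_s \times Y_s$ by pasting of homotopy pullbacks, then check the compatibility over $X\times Y$---is precisely that variation. One small correction: your displayed $3\times 3$ diagram is mislabelled (a point $s\in S$ gives no map $1\to X$ or $1\to Y$, and there are no canonical maps $X_s\to P$ or $Y_s\to P$); the correct pasting is $P_s = X_s\times_X P \simeq X_s\times_S Y \simeq X_s\times_1 Y_s = X_s\times Y_s$, which is what your surrounding prose in any case asserts.
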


\begin{blanko}{Polynomial monads, classically.}
  The theory of polynomial functors has
  roots in topology, representation theory, combinatorics, logic, and computer
  science.  A standard reference is \cite{GambinoKock-PolynomialFunctors}, which
  also contains pointers to those original developments.

  A {\em polynomial} is a diagram of sets
  $$
  I \stackrel s\leftarrow E \stackrel p\to B \stackrel t\to I'  .
  $$
  It defines a polynomial functor $\Set{/I} \to \Set{/I'}$ by the formula
  $$
  t\lowershriek \circ p\lowerstar  \circ s\upperstar  .
  $$
  Polynomial functors form a double category in which the
  $2$-cells are diagrams of the form
  $$\xymatrix{
    \cdot \ar[d] & \ar[l] \cdot \drpullback \ar[r]\ar[d] & \cdot \ar[d] \ar[r] 
    & \cdot \ar[d] \\
    \cdot & \ar[l] \cdot \ar[r] & \cdot \ar[r] & \cdot
  }$$
  Polynomial monads are horizontal monads in this double category,
  and the relevant monad maps are the vertical monad 
  maps~\cite{Fiore-Gambino-Kock:1006.0797}.
  At the level of functors, this situation amounts to having one monad $\RRR$ on the slice
  category $\Set{/I}$, another monad $\SSS$ on the slice category $\Set{/J}$,
  and a map $F: I \to J$ for which $F\lowershriek: \Set{/I}\to \Set{/J}$ and 
  its right adjoint $F\upperstar$ form a
  monad adjunction in the sense of \cite{Weber-CodescCrIntCat}.  This is turn amounts to
  having a natural transformation $\phi : F\lowershriek \RRR \to \SSS F\lowershriek$
  making $(F\lowershriek, \phi)$ into a monad opfunctor in the sense of 
  Street~\cite{Street:formal-monads}.
  
  Polynomial monads over
  $\Set$ can account for nonsymmetric
  operads~\cite{GambinoKock-PolynomialFunctors} and more generally sigma-free
  operads~\cite{Kock:0807}, but to account for general (symmetric) operads, at
  least groupoids are needed instead of sets.
\end{blanko}

\begin{blanko}{Polynomial functors over groupoids.}
  Polynomial functors over groupoids can be dealt with either in a homotopical
  setting as outlined in \cite{Kock:1210.0828} or in a $2$-categorical
  setting~\cite{Weber-PolynomialFunctors}.  
  
  In the homotopical setting, the
  involved groupoids are only ever defined up to homotopy equivalence; one works
  with weak slices, and all notions are homotopy, e.g.~homotopy pullbacks,
  homotopy fibres, etc., exploiting that in the homotopy sense groupoids form a
  locally cartesian closed category.  Ultimately, the natural setting for this
  approach is that of $\infty$-groupoids, as adopted in 
  \cite{Galvez-Kock-Tonks:1512.07573,Galvez-Kock-Tonks:1512.07577}.  We follow
  `tradition' in homotopy theory of denoting the weak slices $\Grpd_{/I}$,
  with a subscripted slash.
  
  On the other hand, in the $2$-categorical approach one works mostly with
  strict slices $\Grpd/I$ (for which we use non-subscripted slashes), strict
  pullbacks, and so on.  (Actually the $2$-categorical approach deals naturally
  with categories instead of groupoids, as exploited to good effect in
  \cite{Weber-PolynomialFunctors,Weber-OpPoly2Mnd,Weber-CodescCrIntCat}, but for
  the present purposes we stick with groupoids.)  The $2$-category $\Grpd$ is
  not locally cartesian closed, so one has to assume the middle maps in the
  polynomial diagrams to be fibrations, in order for pullbacks to have right
  adjoints (`lowerstars').  Fibrations also play an important role to ensure
  that various `strict' constructions involving pullbacks are homotopically
  meaningful.  Such issues are handled efficiently in terms of the {\em
  fibration monads} \cite{Street-FibrationIn2cats}, which turns general
  functors into fibrations.  The weak slice $\Grpd_{/I}$
  appears as the Kleisli category for
  the fibrations monad on the strict slice $\Grpd/I$, and the homotopy content is
  essentially controlled in terms of compatibilty with the fibration monads
  \cite{Weber:TAC18,Weber-OpPoly2Mnd,Weber-CodescCrIntCat}.
  
  Both approaches will be exploited here: for the purpose of setting up the
  simplicial groupoid $\ourX_\bullet$, we shall work $2$-categorically, since
  it gives more precise results, and since a well-developed theory is already 
  in place.  But once that simplicial groupoid is in place,
  we shall pass to the homotopical setting, since the main object of interest
  is the weak slice $\Grpd_{/\ourX_1}$.
\end{blanko}

\begin{blanko}{Key example: the symmetric monoidal category monad.}\label{S}
  From now on, and until Section~\ref{sec:generalised}, $\SSS:\Grpd \to \Grpd$
  denotes the symmetric monoidal category monad.  It is polynomial,
  represented by the polynomial
  $$
  1\leftarrow \B' \to \B \to 1
  $$
  where $\B$ is the groupoid of finite sets and bijections, and $\B'$ is the
  groupoid of finite pointed sets and basepoint-preserving bijections. 
  The formula for evaluation is
  $$
  X \mapsto \int^{n\in \B} \Map(\B'_n,X) \simeq \int^{n\in \B} X^{\underline n},
  $$
  where $\underline n$ denotes the fibre over $n$.
  Note
  that $\B'\to\B$ is a discrete fibration (in fact the classifier for finite discrete
  maps of groupoids).  Hence any monad cartesian over $\SSS$ will
  automatically be finitary again, meaning that all operations have
  finite discrete arity.
\end{blanko}

\begin{blanko}{Operads as polynomial monads.}\label{op=mnd}
  By operad we mean coloured symmetric operad in the category of sets.  It was
  shown in \cite{Weber-OpPoly2Mnd} (Theorem~3.3)
  that operads are the same thing as polynomial
  monads cartesian over the symmetric monoidal category monad
  $$\xymatrix{
     I \ar[d] & \ar[l] E\drpullback \ar[r]\ar[d] & B\ar[d]\ar[r] & I \ar[d] \\
     1 & \ar[l] \B'\ar[r] & \B\ar[r] & 1}$$
  for which $I$ is a set, and $B \to \B$ is a discrete fibration.
  The polynomial viewpoint on operads has proven very useful in homotopy 
  theory~\cite{Batanin-Berger:1305.0086}.
  
  The equivalence goes as follows.  Given an operad $\RRR$,
  let $I$ be its set of colours.  
  %   (In Example~\ref{ex:QFT} below, we shall allow also $I$ to be a groupoid.) 
  Let $B$ be the action groupoid (i.e.~homotopy quotient) of
  the action of the symmetric groups on the
  sets of operations of each arity.  More precisely, for the symmetric-group
  action $\RRR_n \times \mathfrak S_n \to \RRR_n$, the action groupoid has as
  objects the elements in $\RRR_n$, and an arrow from $r$ to $r'$ for each $g\in
  \mathfrak S_n$ such that $r.g = r'$.  Finally, $B$ is the disjoint union of
  these action groupoids, with its canonical map to $\B$, itself the disjoint
  union of the classifying spaces of the $\mathfrak S_n$.  This is a discrete
  fibration, whose fibre over $n$ is the set $\RRR_n$, by the standard fibre
  sequence for action groupoids~\cite{Carlier-Kock}.
  Note that the action respects colours; for
  this reason it is meaningful to define the map $B \to I$ by assigning to an
  operation its output colour.  The groupoid $E$ consists of operations with a
  marked input slot.  The map $E \to I$ assigns to a marked operation the colour
  of its marked input.  The fibre of $E \to B$ over an operation $r$ is the set
  of its input slots. 
  The monad structure
  on the polynomial endofunctor comes precisely from the substitution operation
  of the operad $\RRR$.  The fact that this monad on $\Grpd/I$ is itself polynomial,
  implies that it is cartesian.
  
  Conversely, given a polynomial monad cartesian over $\SSS$ as above, the
  discrete fibration $B \to \B$ induces a $\mathfrak S$-set which is the set of
  operations.  The set of $n$-ary operations is the (homotopy) fibre of $B \to
  \B$ over $n$. 
  The fact that there is a pullback square gives the set of
  input slots
  a linear order (interpreting $\B$ as the groupoid of linear orders $\underline
  n = \{1,2,\ldots,n\}$ and not-necessarily-monotone maps), as befits an operad 
  in the classical sense. 
  The operad substitution comes from the monad multiplication.

  Although not central to this article, the monads $\RRR$, $\SSS$ and
  the fibrations monads described above, are all \emph{$2$-monads}. In particular
  since $\RRR$ is a $2$-monad, one may consider its strict algebras, and from
  \cite{Weber-OpPoly2Mnd} these were understood to be `weakly-equivariant'
  $\Cat$-valued algebras of the operad $\RRR$. The $2$-dimensionality of $\RRR$ and $\SSS$
  enables the definition of an `algebra of $\RRR$ internal to an algebra of $\SSS$'
  to be made, as in
  \cite{Batanin:0207281, Batanin-Berger:1305.0086, Weber-CodescCrIntCat},
  and it is these that correspond to
  the algebras of the operad $\RRR$ in the usual sense.  
  From now on, the symbol $\RRR$ refers rather to the polynomial monad, which is
  what we actually work with.
\end{blanko}

%%%%%%%%%%%%%%%%%%%%%%%%%%%%%%%%%%%%%%%%%%%%%%%%%%
\section{Monad adjunctions and the relative ($2$-sided) bar construction}
%%%%%%%%%%%%%%%%%%%%%%%%%%%%%%%%%%%%%%%%%%%%%%%%%%

\label{sec:bar}

\begin{blanko}{Set-up.}
  We consider the situation as in \ref{op=mnd}, where we have a polynomial
  monad $\RRR$ on $\Grpd{/I}$, cartesian over the
  symmetric monoidal category monad $\SSS$ on $\Grpd$,
  represented altogether by the polynomial diagram
    $$\xymatrix{
     I \ar[d]_F & \ar[l] E\drpullback \ar[r]\ar[d] & B\ar[d]\ar[r] & I \ar[d]^F \\
     1 & \ar[l] \B'\ar[r] & \B\ar[r] & 1 
  }$$
  The two monads $\RRR$ and $\SSS$ are intertwined by means of the functor
  $F\lowershriek$ and the natural transformation
  $$
  \phi : F\lowershriek \RRR \Rightarrow \SSS F\lowershriek 
  $$
  forming together a monad opfunctor in the sense of 
  Street~\cite{Street:formal-monads}, a monad adjunction in the sense of 
  \cite{Weber-CodescCrIntCat},
  or a vertical morphism of horizontal monads in the double-category 
  setting~\cite{Fiore-Gambino-Kock:1006.0797}.
  
  It is a general fact that the
  natural transformation $\phi$ is cartesian.  This follows
  because its ingredients are the unit and counit of lowershriek-upperstar 
  adjunctions and an instance of the Beck--Chevalley isomorphism.  See 
  \cite[\S~3.3]{Weber-CodescCrIntCat} for details.
\end{blanko}
  
\begin{blanko}{A simplicial-object-with-missing-top-face-maps.}
  Let $A$ denote the terminal object in $\Grpd/I$, and let $\alpha: \RRR A \to 
  A$ denote the unique map from $\RRR A$ in $\Grpd/I$. 
  (Note that $\alpha$ has underlying map of groupoids $B \to I$.)
  The pair $(A,\alpha)$ is
  the terminal $\RRR$-algebra.

  In $\Grpd/I$ there is induced a natural 
  simplicial-object-with-missing-top-face-maps
  \begin{equation}\label{eq:RA}
  \xymatrix@C+1.8em{
  A 
  \ar[r]|(0.6){s_0} 
  &
  \ar[l]<+2mm>|(0.6){d_0}\ar@{}[l]<-2.3mm>|{\cdot \  \cdot \ \cdot \ \cdot \ \cdot\ \cdot\ \cdot } 
  \RRR A 
  \ar[r]<-2mm>|(0.6){s_0}\ar[r]<+2mm>|(0.6){s_1}  
  &
  \ar[l]<+4mm>|(0.6){d_0}\ar[l]|(0.6){d_1}\ar@{}[l]<-4.3mm>|{\cdot \ \cdot \ \cdot \ \cdot \ \cdot\ \cdot\ \cdot }
  \RRR\RRR A
  \ar[r]<-4mm>|(0.6){s_0}\ar[r]|(0.6){s_1}\ar[r]<+4mm>|(0.6){s_2}  
  &
  \ar[l]<+6mm>|(0.6){d_0}\ar[l]<+2mm>|(0.6){d_1}\ar[l]<-2mm>|(0.6){d_2}\ar@{}[l]<-6.3mm>|{\cdot \  \cdot \ \cdot \ \cdot \ \cdot\ \cdot\ \cdot }
  \RRR\RRR\RRR A &
  \ar@{}|\cdots[r]
  &
  }  
  \end{equation}
  The bottom face maps come from the action $\alpha: \RRR A \to A$,
  and the remaining face and degeneracy maps come from the monad structure.
  Hence the basic maps are 
  $$
  \xymatrix@C+3em{
  A 
  \ar[r]|(0.6){\eta_A} 
  &
  \ar[l]<+2mm>|(0.6){\alpha}\ar@{}[l]<-2.4mm>|{\cdot \  \cdot \ \cdot \ \cdot \ \cdot\ \cdot\ \cdot } 
  \RRR A 
  \ar[r]<-2mm>|(0.6){\RRR\eta_A}\ar[r]<+2mm>|(0.6){\eta_{\RRR A}}  
  &
  \ar[l]<+4mm>|(0.6){\RRR\alpha}\ar[l]|(0.6){\mu_A}\ar@{}[l]<-4.4mm>|{\cdot \ \cdot \ \cdot \ \cdot \ \cdot\ \cdot\ \cdot }
  \RRR\RRR A
  }$$
  and in general, $s_k  : \RRR^n A \to  \RRR^{n+1}A$  is given by $\RRR^{n-k} 
  \eta_{\RRR^k 
  A}$ and
  $d_k  : \RRR^{n+1}A \to \RRR^n A$ is given by $\RRR^{n-k} \mu_{\RRR^{k-1} A}$, 
  with the convention that
  $\mu_{\RRR^{-1}A} = \alpha$.
  
  We now apply $F\lowershriek$ to the diagram~\eqref{eq:RA} above, to obtain 
  inside $\Grpd$ a
  simplicial-object-with-missing-top-face-maps which we denote $C_\bullet$:
  $$
    \xymatrix@C+2.5em{
    C_\bullet: &
  F\lowershriek A 
  \ar[r]|(0.6){s_0} 
  &
  \ar[l]<+2mm>|(0.6){d_0}\ar@{}[l]<-2.3mm>|{\cdot \  \cdot \ \cdot \ \cdot \ \cdot\ \cdot\ \cdot } 
  F\lowershriek\RRR A 
  \ar[r]<-2mm>|(0.6){s_0}\ar[r]<+2mm>|(0.6){s_1}  
  &
  \ar[l]<+4mm>|(0.6){d_0}\ar[l]|(0.6){d_1}\ar@{}[l]<-4.3mm>|{\cdot \ \cdot \ \cdot \ \cdot \ \cdot\ \cdot\ \cdot }
  F\lowershriek\RRR\RRR A
  \ar[r]<-4mm>|(0.6){s_0}\ar[r]|(0.6){s_1}\ar[r]<+4mm>|(0.6){s_2}  
  &
  \ar[l]<+6mm>|(0.6){d_0}\ar[l]<+2mm>|(0.6){d_1}\ar[l]<-2mm>|(0.6){d_2}\ar@{}[l]<-6.3mm>|{\cdot \  \cdot \ \cdot \ \cdot \ \cdot\ \cdot\ \cdot }
  F\lowershriek\RRR\RRR\RRR A &
%   \ar@{}|
  \cdots
  &
  }
  $$
  Finally we apply $\SSS$: the diagram now acquires the missing top face maps,
  and constitutes altogether a simplicial object in $\Grpd$
  (\cite{Weber-CodescCrIntCat}, Lemma 4.3.2) which we denote $\ourX_\bullet$:
  $$
  \xymatrix@C+2.8em{
  \ourX_\bullet : &
  \SSS F\lowershriek A 
  \ar[r]|(0.6){s_0} 
  &
  \ar[l]<+2mm>|(0.6){d_0}\ar@{..>}[l]<-2mm>|(0.6){d_1}
  \SSS F\lowershriek \RRR A 
  \ar[r]<-2mm>|(0.6){s_0}\ar[r]<+2mm>|(0.6){s_1}  
  &
  \ar[l]<+4mm>|(0.6){d_0}\ar[l]|(0.6){d_1}\ar@{..>}[l]<-4mm>|(0.6){d_2}
  \SSS F\lowershriek \RRR\RRR A
  \ar[r]<-4mm>|(0.6){s_0}\ar[r]|(0.6){s_1}\ar[r]<+4mm>|(0.6){s_2}  
  &
  \ar[l]<+6mm>|(0.6){d_0}\ar[l]<+2mm>|(0.6){d_1}\ar[l]<-2mm>|(0.6){d_2}\ar@{..>}[l]<-6mm>|(0.6){d_3}
  \SSS F\lowershriek \RRR\RRR\RRR A &
  \cdots
  }$$
  The new top face maps are given by 
  $$
  d_\top := \mu^\SSS \circ \SSS(\phi) .
  $$
  For example, the first of the top face maps is given as
  $$\xymatrix{
      & \ar[ld]_{\mu^S} \SSS\SSS F\lowershriek A & \\
    \SSS F\lowershriek A && \ar[ll]^{d_1} \ar[lu]_{\SSS\phi}
    \SSS F\lowershriek \RRR A .
  }$$
\end{blanko}

\begin{prop}[\cite{Weber-CodescCrIntCat}, Proposition 4.4.1]
  The simplicial object $\ourX_\bullet$ is a strict category object.
\end{prop}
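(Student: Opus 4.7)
The goal is to show that the simplicial object $\ourX_\bullet$ is the nerve of a strict internal category in $\Grpd$. By the standard nerve characterisation, this amounts to verifying (i) that the face and degeneracy maps satisfy the simplicial identities strictly, and (ii) the strict Segal conditions
$$
\ourX_n \isopil \ourX_1 \times_{\ourX_0} \cdots \times_{\ourX_0} \ourX_1, \qquad n \ge 2,
$$
as strict pullbacks in $\Grpd$.

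For (i), the identities involving only the face maps $d_0,\ldots,d_{n-1}$ and the degeneracies are already satisfied in $C_\bullet$ as a simplicial-object-with-missing-top-face-maps in $\Grpd/I$, and they survive pointwise application of $\SSS$ by functoriality. The new identities involving a top face $d_\top = \mu^\SSS \circ \SSS\phi$ reduce to the coherence axioms of the monad opfunctor $(F\lowershriek,\phi)$: compatibility of $\phi$ with $\mu^\RRR$ and $\eta^\RRR$ takes care of the interaction of $d_\top$ with the lower face and degeneracy maps, while the identity $d_\top d_\top = d_\top d_n$ at each level reduces to associativity of $\mu^\SSS$ together with naturality of $\phi$.

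For (ii), it suffices to treat $n = 2$, the higher cases following by iteration. The relevant square is
$$\xymatrix{
\SSS F\lowershriek \RRR\RRR A \ar[r]^-{d_0}\ar[d]_-{d_2} & \SSS F\lowershriek \RRR A \ar[d]^-{d_1} \\
\SSS F\lowershriek \RRR A \ar[r]_-{d_0} & \SSS F\lowershriek A
}$$
with horizontal maps $\SSS F\lowershriek$ applied to $\alpha$, and vertical maps $\mu^\SSS \circ \SSS\phi$. Its commutativity is already the simplicial identity $d_0 d_2 = d_1 d_0$, which unfolds to the naturality of $\phi$ at $\alpha$ together with the naturality of $\mu^\SSS$. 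The key input for the pullback property is the cartesianness of $\phi$, noted in the set-up: applied to $\alpha : \RRR A \to A$ it yields a strict pullback
$$\xymatrix{
F\lowershriek \RRR\RRR A \ar[r]^-{F\lowershriek \RRR \alpha}\ar[d]_-{\phi_{\RRR A}} \drpullback & F\lowershriek \RRR A \ar[d]^-{\phi_A} \\
\SSS F\lowershriek \RRR A \ar[r]_-{\SSS F\lowershriek \alpha} & \SSS F\lowershriek A
}$$
in $\Grpd$. I would then apply $\SSS$ and postcompose the right-hand column with $\mu^\SSS$ to recover the Segal square, using that the right vertical $\phi_A$ lands in the free $\SSS$-algebra $\SSS F\lowershriek A$ so that the $\SSS$-image of this particular pullback remains a pullback after multiplication; concretely, a section of the Segal map can be written down by pairing a family of outer operations with a family of inner operations and reassembling them via the $\SSS$-algebra structure on $\SSS F\lowershriek A$.

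The main obstacle is ensuring all pullbacks are strict rather than only up to equivalence; the $\SSS$-image of a cartesian square is not automatically cartesian in $\Grpd$. I would handle this via the $2$-categorical framework outlined in \ref{op=mnd}, replacing $\alpha$ by a fibration through the fibrations monad when necessary and using the strict compatibility of $\SSS$ with pullbacks over free $\SSS$-algebras. Once $n = 2$ is in place, the higher Segal conditions follow by the same argument applied one step deeper, or inductively by pasting: writing $\ourX_n = \SSS F\lowershriek \RRR \cdot \RRR^{n-1} A$ and repeating the cartesianness argument reduces to the $n=2$ case over $\ourX_0$.
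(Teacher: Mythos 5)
Your overall strategy is the paper's: unravel the top face map $d_\top=\mu^\SSS\circ\SSS\phi$, so that each Segal square decomposes into two stacked squares, and use the cartesianness of $\phi$ (applied to the naturality square at $\alpha$, respectively $\RRR^k\alpha$) for the upper one. But the second half of your argument has a genuine gap. After applying $\SSS$ to the cartesian naturality square for $\phi$, the Segal square is recovered by pasting underneath it the naturality square of $\mu^\SSS$ at $F\lowershriek\alpha$, with vertical maps $\mu^\SSS_{F\lowershriek\RRR A}$ and $\mu^\SSS_{F\lowershriek A}$. For the composite to be a strict pullback you need this second square to be one as well, and the reason is that $\SSS$, being polynomial, is a \emph{cartesian monad}: all naturality squares of $\mu^\SSS$ are strict pullbacks. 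Your substitute --- ``the right vertical lands in the free $\SSS$-algebra'' and ``a section of the Segal map can be written down by pairing and reassembling'' --- does not do this work: exhibiting a section of the comparison map does not show a square is a pullback, and nothing you wrote isolates the fact about $\mu^\SSS$ that is actually being used.

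Your closing worry, that ``the $\SSS$-image of a cartesian square is not automatically cartesian in $\Grpd$,'' is misplaced at this strict level: polynomial functors preserve strict pullbacks (they are composites of $s\upperstar$, $p\lowerstar$ and $t\lowershriek$, each of which does), and this is precisely what the paper invokes to conclude that $\SSS$ of the $\phi$-naturality square is cartesian. The fibration-monad machinery you reach for belongs to the separate, genuinely delicate statement that these strict pullbacks are also \emph{homotopy} pullbacks (Proposition~\ref{prop:Segal} via Lemma~\ref{lem:cart=hocart}); importing it here conflates the strict category-object claim with the Segal-groupoid claim, and ``replacing $\alpha$ by a fibration'' would change the simplicial object rather than prove something about it. With the cartesianness of $\mu^\SSS$ supplied and the spurious fibrancy detour removed, your argument becomes the paper's.
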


The statement is that all the squares
$$\xymatrix{
   \ourX_{n+2} \ar[r]^{d_0}\ar[d]_{d_{n+2}} & \ourX_{n+1} \ar[d]^{d_{n+1}} \\
   \ourX_{n+1} \ar[r]_{d_0} & \ourX_{n}
}$$
are strict pullbacks.  As an illustration, the first of these squares ($n=0$)
is the
following, by unravelling the definition of the top face maps:
\begin{equation}\label{phi+mu}
\vcenter{
\xymatrix{
    \SSS F\lowershriek \RRR\RRR A\ar[r]^{\SSS F\lowershriek \RRR \alpha}
    \ar[d]_{\SSS(\phi_{\RRR A})} & \SSS F\lowershriek \RRR A \ar[d]^{\SSS(\phi_A)} \\
   \SSS\SSS F\lowershriek \RRR A \ar[r]^{\SSS\SSS F\lowershriek \alpha}
   \ar[d]_{\mu^\SSS_{F\lowershriek \RRR A}} & \SSS\SSS F\lowershriek A 
   \ar[d]^{\mu^\SSS_{F\lowershriek A}} \\
   \SSS F\lowershriek \RRR A \ar[r]_{\SSS F\lowershriek \alpha} & \SSS 
   F\lowershriek A     .
}}
\end{equation}
The bottom square is a naturality square for the monad multiplication, and is
therefore a strict pullback.  The top square is $\SSS$ applied to a naturality square
for $\phi$.  Since $\phi$ is cartesian and $\SSS$ preserves pullbacks, this
is again cartesian.

\begin{blanko}{Remark.}  
  This simplicial object is a variation of the $2$-sided bar construction,
  which has a long history in algebraic topology~\cite{May:LNM271}.
  The construction here from an operad
  is important in category theory since its codescent object (its lax
  colimit) is the classifier for $\RRR$-algebras (at the present level of 
  generality in symmetric monoidal
  categories), i.e.~the universal symmetric monoidal category containing an internal
  $\RRR$-algebra, cf.~\cite{Batanin-Berger:1305.0086}, \cite{Weber-CodescCrIntCat}. 
  In the present work, we do not take the codescent object, but work
  directly with the simplicial groupoid.
\end{blanko}

\begin{prop}\label{prop:Segal}
  The simplicial groupoid $\ourX_\bullet$ is a Segal groupoid.
\end{prop}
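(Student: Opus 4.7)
The plan is to upgrade the strict-pullback conclusion of the preceding proposition (that $\ourX_\bullet$ is a strict category object) to a pseudo-pullback statement, which is exactly the Segal condition. Iterating the strict pullback squares
$$
\xymatrix{
\ourX_{n+2} \ar[r]^{d_0}\ar[d]_{d_{n+2}} & \ourX_{n+1} \ar[d]^{d_{n+1}} \\
\ourX_{n+1} \ar[r]_{d_0} & \ourX_{n}
}
$$
presents the Segal map $\ourX_n \to \ourX_1 \times_{\ourX_0}\cdots\times_{\ourX_0}\ourX_1$ as a strict isomorphism of groupoids. It therefore suffices to verify that each of these strict pullbacks is also a pseudo-pullback in $\Grpd$, since then the Segal map becomes an equivalence of groupoids, and $\ourX_\bullet$ is a Segal groupoid.

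The key step is to show that $d_0:\ourX_{n+1}\to\ourX_n$ is an isofibration of groupoids; strict pullback along an isofibration automatically coincides with the pseudo-pullback. Unravelling the construction, $d_0$ is $\SSS F\lowershriek$ applied to the map $\RRR^n\alpha : \RRR^{n+1}A\to\RRR^n A$ in $\Grpd/I$. Since $A$ is the terminal $\RRR$-algebra, its underlying groupoid is the discrete set $I$, so $\alpha:\RRR A\to A$ lands in a discrete groupoid and is trivially an isofibration. Because $\RRR$ is polynomial with discrete-fibration middle map $E\to B$ (as in \ref{op=mnd}), it preserves isofibrations of groupoids, so $\RRR^n\alpha$ is again an isofibration. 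The functor $F\lowershriek:\Grpd/I\to\Grpd$ is essentially evaluation of the underlying groupoid and therefore preserves isofibrations; and $\SSS$ preserves isofibrations because $\B'\to\B$ is a discrete fibration. Altogether $d_0$ is an isofibration.

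With $d_0$ an isofibration, each strict pullback square of the preceding proposition is a pseudo-pullback in $\Grpd$, so the iterated Segal map is an equivalence of groupoids, and $\ourX_\bullet$ is a Segal groupoid. The main technical point is the isofibration verification, which rests squarely on the cartesian-over-polynomial structure set up in Section~\ref{sec:operads} and on the discreteness of $A$ as a groupoid over $I$; the Segal condition then follows immediately from the strict categoricity already established.
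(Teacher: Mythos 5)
Your proof is correct, but it takes a genuinely different route from the paper's. The paper also starts from the strict category object structure and upgrades the strict Segal pullbacks to homotopy pullbacks, but it does so by decomposing each Segal square into naturality squares for $\phi$ and for $\mu^{\SSS}$ (as in diagram~\eqref{phi+mu}): since $\phi$ and $\mu^{\SSS}$ are strictly cartesian, it only remains to invoke Lemma~\ref{lem:cart=hocart}, which promotes strictly cartesian $2$-natural transformations between polynomial $2$-functors to homotopy cartesian ones when the codomain functor is familial---verified for $\SSS F\lowershriek$ and $\SSS$ using that $I$ is discrete and $\B'\to\B$ is a split opfibration. You instead observe that one leg of each Segal cospan, $d_0=\SSS F\lowershriek\RRR^{n}\alpha$, is an isofibration---because $\alpha$ has discrete codomain and $\RRR$, $F\lowershriek$ and $\SSS$ each preserve isofibrations---and that a strict pullback along an isofibration is also a homotopy pullback. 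Both arguments ultimately rest on the same technical input, namely that polynomial functors whose middle map is a fibration preserve fibrations (familiality in the sense of \cite{Weber:TAC18}); your one genuinely nontrivial preservation claim, that $p\lowerstar$ preserves isofibrations, is exactly this and deserves that citation rather than a passing assertion. What your route buys is directness, and it makes transparent why, in Section~\ref{sec:generalised}, the discreteness of $I$ can be traded for the hypothesis that $F\lowershriek\alpha$ be a fibration: that hypothesis is precisely what starts your chain of isofibrations. What the paper's route buys is uniformity: the same Lemma~\ref{lem:cart=hocart} is reused for the homotopy cartesianness statements in Proposition~\ref{prop:D=symmoncatobj} and Lemma~\ref{lem:C1C2C1}, which concern squares whose legs are not face maps of the form $d_0$.
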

\begin{proof}
  Since we already know that it is a category object, the
  statement is that the strict pullback squares in question are also homotopy
  pullbacks, a typical result for the $2$-categorical approach to 
  polynomial functors.  Since the Segal
  squares are composed of components of $\phi$ and components of $\mu^\SSS$ (as 
  in diagram~\eqref{phi+mu}),
  it is enough to show that $\SSS \circ F\lowershriek$ (the codomain of $\phi$)
  and $\SSS$ itself (the codomain of $\mu^\SSS$) satisfy the conditions of
  the next lemma, which is clear since $I$ is discrete
  and $\B'\to \B$ is a split opfibration.
\end{proof}

\begin{lemma}\label{lem:cart=hocart}
  A strictly cartesian $2$-natural transformation between
  polynomial $2$-functors (between strict slices of $\Grpd$) is also
  homotopy cartesian provided the codomain $2$-functor $\Grpd/I \to \Grpd/J$
  has its lowerstar component along a split 
  opfibration, and the groupoid $I$ is discrete.
\end{lemma}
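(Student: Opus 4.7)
My plan is to decompose the polynomial $2$-functor $Q$ as $t_! \circ p_* \circ s^*$ and to verify that each of the three constituents, under the stated hypotheses, preserves the relevant pullbacks in the homotopy-correct sense. A strictly cartesian $\phi : P \Rightarrow Q$ has naturality squares that are strict pullbacks; the task is to upgrade each such square to a homotopy pullback. The central $2$-categorical tool I would invoke throughout is the standard criterion that a strict pullback square of groupoids is a homotopy pullback as soon as one of its legs into the bottom-right corner is an isofibration; in $\Grpd$, a split opfibration is, in particular, an isofibration.

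I would then check the three constituents in turn. For $s^*$: since $I$ is discrete, the map $s: E \to I$ is trivially an isofibration (all isomorphisms in a discrete groupoid are identities and lift freely), so strict pullback along $s$ sends strict pullbacks to homotopy pullbacks. For $p_*$: by hypothesis $p: E \to B$ is a split opfibration, which is precisely the condition, formalised by the fibration-monad calculus of \cite{Weber-OpPoly2Mnd, Weber-CodescCrIntCat}, under which the strict right adjoint $p_*$ coincides with its homotopy-derived form, and in particular sends strict pullbacks to homotopy pullbacks. For $t_!$: this is simply postcomposition with $t: B \to J$, which preserves all pullbacks at the underlying level and so trivially sends strict pullbacks to homotopy pullbacks.

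Combining these three observations, the composite $Q$ carries the strictly cartesian naturality squares of $\phi$ to homotopy pullback squares, so $\phi$ is homotopy cartesian. The main obstacle I anticipate is the middle step — $p_*$ along a split opfibration — since this is where the strict/homotopy distinction is genuinely delicate; but this is precisely the point where Weber's fibration-monad machinery was developed to give the clean answer we need, so invoking it should make the proof short and transparent.
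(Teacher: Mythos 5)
There is a genuine gap, and it sits at the centre of your argument. Your three observations are all claims about what the functor $Q = t\lowershriek \circ p\lowerstar \circ s\upperstar$ does to squares in its domain; but the squares you need to upgrade are the naturality squares of $\phi$, whose corners are $PX$, $PY$, $QX$, $QY$ and whose vertical edges are components of $\phi$. Such a square is not $Q$ applied to anything, so even if each constituent of $Q$ did ``send strict pullbacks to homotopy pullbacks'', nothing would follow about these squares. You would need either that one of the two legs into the corner $QY$ (namely $\phi_Y$ or $Qf$) is an isofibration --- and neither is guaranteed by the hypotheses --- or a genuinely different mechanism. The paper supplies that mechanism by citing Proposition 4.6.5 of \cite{Weber:Tbilisi}: the relevant property of the codomain $2$-functor is \emph{familiality} \cite{Weber:TAC18} (essentially, preservation of fibrations together with generic factorisations), and that proposition is precisely the nontrivial bridge from a property of $Q$ alone to homotopy-cartesianness of a strictly cartesian $\phi$ into it. (Roughly: since $I$ is discrete every object of $\Grpd/I$ is fibrant, so $QX \to Q1$ is a fibration for every $X$; the squares expressing cartesianness of $\phi$ over the terminal object are then homotopy pullbacks, and the naturality square at $f$ is recovered by right-cancellation of pasted homotopy pullbacks.) The remaining work --- showing $Q$ is familial --- is where the stated hypotheses enter, via the variation of Theorem 4.4.5 of \cite{Weber-PolynomialFunctors}: discreteness of $I$ and the split-opfibration condition are exactly what let $s\upperstar$ and $p\lowerstar$ lift to the level of fibrations.

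Two of your individual claims also need repair. The assertion that $t\lowershriek$ ``trivially sends strict pullbacks to homotopy pullbacks'' is false: postcomposition leaves the underlying square of groupoids unchanged, so it cannot upgrade a strict pullback that fails to be a homotopy pullback (it does \emph{preserve} homotopy pullbacks, which is all you could legitimately use). And your appeal to the fibration-monad calculus for $p\lowerstar$ invokes the right machinery but for the wrong property: what that machinery delivers is that $p\lowerstar$ lifts to the level of fibrations (hence that $Q$ is familial), not that it converts strict pullbacks into homotopy ones.
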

\begin{proof}
  Proposition~4.6.5 of \cite{Weber:Tbilisi} states that a strictly cartesian
  $2$-natural transformation between polynomial $2$-functors (between strict
  slices of $\Grpd$) is also homotopy cartesian if just the codomain $2$-functor
  is {\em familial} \cite{Weber:TAC18}, which essentially means that it
  preserves fibrations.  On the other hand, familiality is implied by the
  following slight variation of
  Theorem~4.4.5 of \cite{Weber-PolynomialFunctors}, whose notation we use
  freely:  in the proof given in \cite{Weber-PolynomialFunctors}, the condition
  that the lowershriek component is a fibration is not needed. 
  For the required factorisation of $\mathcal{K}/I \to  \mathcal{K}/B$ through 
  $U^{\Phi_{\mathcal{K},B}}$ one only needs
  $s\upperstar$  and $p\lowerstar$ to lift to the level of fibrations, the
  rest of the proof goes as in \cite{Weber-PolynomialFunctors}.
\end{proof}

\begin{prop}\label{prop:D=symmoncatobj}
  The category object $\ourX_\bullet := \SSS F\lowershriek \RRR^\bullet A$ is a symmetric
  monoidal category object in $\Grpd$, and its structure maps $\tensor 
  : \SSS \ourX_\bullet \to \ourX_\bullet$ are
  cartesian and homotopy cartesian on degeneracy maps and inner face maps.
\end{prop}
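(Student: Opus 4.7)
The plan is to equip $\ourX_\bullet$ with a symmetric monoidal structure by setting, for each $n \geq 0$,
$$\tensor_n \;:=\; \mu^\SSS_{F\lowershriek \RRR^n A}\colon \SSS\SSS F\lowershriek \RRR^n A \longrightarrow \SSS F\lowershriek \RRR^n A,$$
so that each $(\ourX_n, \tensor_n)$ is the free $\SSS$-algebra on $F\lowershriek \RRR^n A$. The unit and associativity laws at each level are immediate from the monad axioms for $\SSS$, so there is nothing to verify there. What remains is (i) to check that $\tensor_\bullet$ is a simplicial map, which, together with the fact that $\SSS$ preserves the Segal pullbacks of Proposition~\ref{prop:Segal}, exhibits $\ourX_\bullet$ as a symmetric monoidal category object; and (ii) to prove the cartesianness and homotopy cartesianness claims.

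For simpliciality, observe that each degeneracy $s_k$ and each non-top face map $d_k$ is obtained by applying $\SSS$ to a map $g$ in $\Grpd$ (assembled from $\eta^\RRR$, $\mu^\RRR$, and the terminal action $\alpha$); compatibility with $\tensor_\bullet$ is then just the naturality square of $\mu^\SSS$ at $g$. The genuinely new calculation is compatibility with the top face $d_\top = \mu^\SSS_X \circ \SSS\phi$ (with $X = F\lowershriek \RRR^{n-1} A$, $Y = F\lowershriek \RRR^n A$, and $\phi = \phi_{\RRR^{n-1}A}\colon Y \to \SSS X$): unwinding $\tensor_{n-1} \circ \SSS(d_\top) = d_\top \circ \tensor_n$ produces
$$\mu^\SSS_X \circ \SSS\mu^\SSS_X \circ \SSS\SSS\phi \;=\; \mu^\SSS_X \circ \SSS\phi \circ \mu^\SSS_Y,$$
which follows first by rewriting the left-hand side via associativity of $\mu^\SSS$ (replacing $\SSS\mu^\SSS_X$ composed with $\mu^\SSS_X$ by $\mu^\SSS_{\SSS X}$ composed with $\mu^\SSS_X$) and then by naturality of $\mu^\SSS$ at $\phi$.

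For cartesianness of $\tensor$ against any degeneracy or inner face map $\SSS(g)$, the square in question is exactly the naturality square of $\mu^\SSS$ at $g$. Since $\SSS$ is a cartesian polynomial monad, $\mu^\SSS$ is a cartesian natural transformation, so each such square is a strict pullback. Homotopy cartesianness then follows by Lemma~\ref{lem:cart=hocart}: the codomain $2$-functor $\SSS$ is polynomial with lowerstar component along the split opfibration $\B' \to \B$ above the discrete base $1$, which is precisely the hypothesis that promotes strictly cartesian squares to homotopy pullbacks. I expect the main hurdle to be purely bookkeeping in the top-face compatibility, since that is the only place where $\phi$ interacts non-trivially with $\mu^\SSS$; beyond the monad axioms for $\SSS$, the cartesianness of $\phi$, and the lemma just invoked, no new structural input is required.
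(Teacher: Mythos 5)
Your proposal is correct and follows essentially the same route as the paper: the monoidal structure is the free $\SSS$-algebra structure $\mu^\SSS_{F\lowershriek\RRR^n A}$ levelwise, cartesianness on degeneracies and inner face maps is just the cartesianness of $\mu^\SSS$, and homotopy cartesianness is Lemma~\ref{lem:cart=hocart} applied with codomain $2$-functor $\SSS$ itself. The only material you add beyond the paper's one-line justification is the explicit verification that $\tensor_\bullet$ commutes with the top face maps $d_\top = \mu^\SSS\circ\SSS\phi$ (via associativity and naturality of $\mu^\SSS$), which is correct bookkeeping that the paper leaves implicit.
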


The symmetric monoidal structure just comes from the fact that 
levelwise $\ourX_\bullet$ is $\SSS$ of something, hence is in fact a {\em free}
$\SSS$-algebra.
The cartesianness is just the cartesianness of the structure maps of the monad 
$\SSS$---this applies to all but the top face maps. Homotopy cartesianness 
follows from Lemma~\ref{lem:cart=hocart}.

\begin{blanko}{The connected Green function.}
  The (weak) slice $\Grpd_{/\ourX_1}$ contains a canonical element, namely
  $$
  G: C_1 \to \ourX_1
  $$
  which we call the {\em connected Green function}.  Recalling
  that $C_1 = F\lowershriek \RRR A$
  and that $\ourX_1 = \SSS F\lowershriek \RRR A$,  the connected Green function is simply
  $\eta^\SSS_{F\lowershriek \RRR A}$, the unit for the monad $\SSS$, the inclusion of
  singleton families into all families. 
  
  We shall also need to split $G$ into its fibres over $\ourX_0$. 
  Denote by $w$ an object in $\ourX_0 = \SSS F\lowershriek A$, a 
  tuple of objects in $F\lowershriek A$.
  Write ${}_w(\ourX_1)$ for the homotopy fibre
  over $w\in \ourX_0$ of the face map $d_1: \ourX_1 \to \ourX_0$
  (reserving the notation
  $(\ourX_1)_w$ for the (homotopy) fibre over $w\in \ourX_0$ of the other face map
  $d_0: \ourX_1 \to \ourX_0$).
  For each $w\in \ourX_0$, consider the (homotopy) pullback squares
  $$\xymatrix{
     {}_w (C_1) \drpullback \ar[r]\ar[d]_{G_w} & C_1 \ar[d]^G \\
      {}_w (\ourX_1) \drpullback \ar[r]\ar[d] & \ourX_1 \ar[d]^{d_1} \\
      1 \ar[r]_{\name{w}} & \ourX_0
  }$$
  so that we have (as in \ref{hosum})
  $$
  G = \int^w G_w .
  $$
  $G_w$ is thus the $w$-ary part of the connected Green function.
\end{blanko}

\begin{lemma}\label{lem:C1C2C1}
  We have (homotopy) pullback squares
  $$\xymatrix{
     C_1 \ar[d] & \drpullback \dlpullback \ar[l]_{d_1} C_2 \ar[d] \ar[r]^{d_0} & C_1 \ar[d] \\
     \ourX_1 & \ar[l]_{d_1} \ourX_2 \drpullback \ar[r]^{d_0} \ar[d]_{d_2}& \ourX_1 \ar[d]^{d_1} \\
     & \ourX_1 \ar[r]_{d_0} & \ourX_0
  }$$
\end{lemma}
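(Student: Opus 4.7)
The diagram contains three pullback markers, giving three squares to verify: two upper squares connecting the simplicial object $C_\bullet$ with $\ourX_\bullet = \SSS C_\bullet$, and a lower-right square living entirely in $\ourX_\bullet$. The plan is to handle each separately, noting that the vertical maps $C_i \to \ourX_i$ are the components $\eta^{\SSS}_{F\lowershriek \RRR^i A}$ of the unit of the symmetric monoidal category monad, since $\ourX_i = \SSS C_i$ by construction.

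For the upper squares, unwinding the definition of the face maps in \eqref{eq:RA}, the horizontal arrows $d_1, d_0 : C_2 \to C_1$ are $F\lowershriek \mu_A$ and $F\lowershriek \RRR \alpha$ respectively, and the inner face maps $d_1, d_0 : \ourX_2 \to \ourX_1$ are their images under $\SSS$. Under these identifications each of the two upper squares becomes a naturality square for $\eta^{\SSS}$. Strict cartesianness is then a general property of $\eta^{\SSS}$: since $\SSS$ is polynomial with representing middle map $\B' \to \B$ a discrete fibration, the polynomial inclusion picking out the singleton forces $\eta^{\SSS}$ to be strictly cartesian. Homotopy cartesianness follows from Lemma~\ref{lem:cart=hocart}, whose hypotheses — codomain $\SSS$ has lowerstar along the split opfibration $\B' \to \B$, and the domain slice $\Grpd = \Grpd/1$ is over a discrete groupoid — are immediate from the definition of $\SSS$ in~\ref{S}.

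For the lower-right square, I would simply observe that it is the $n = 0$ instance of the Segal square
$$\xymatrix{\ourX_{n+2} \ar[r]^{d_0}\ar[d]_{d_{n+2}} & \ourX_{n+1} \ar[d]^{d_{n+1}} \\ \ourX_{n+1} \ar[r]_{d_0} & \ourX_{n}}$$
established as a strict pullback in Proposition~4.4.1 of \cite{Weber-CodescCrIntCat} (the explicit computation being precisely diagram \eqref{phi+mu} above) and as a homotopy pullback by Proposition~\ref{prop:Segal}. The only real piece of bookkeeping is matching the naturality squares of $\eta^{\SSS}$ with the depicted upper squares arrow-for-arrow, which is a direct unfolding of the face-map formulae $d_k = \RRR^{n-k}\mu_{\RRR^{k-1}A}$ (with the convention $\mu_{\RRR^{-1}A} = \alpha$); I do not foresee any other obstacle.
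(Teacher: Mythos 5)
Your proposal is correct and follows essentially the same route as the paper: the two upper squares are naturality squares for the unit $\eta^{\SSS}$, strictly cartesian because $\SSS$ is a cartesian (polynomial) monad and homotopy cartesian by Lemma~\ref{lem:cart=hocart}, while the lower-right square is the $n=0$ Segal square handled by Proposition~\ref{prop:Segal}. Your explicit identification of the face maps as $F\lowershriek\mu_A$ and $F\lowershriek\RRR\alpha$ is accurate bookkeeping that the paper leaves implicit.
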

\begin{proof}
  The top squares are cartesian since they are naturality squares
  for $\eta: \Id \Rightarrow \SSS$, and homotopy cartesian by 
  Lemma~\ref{lem:cart=hocart}.
  For the bottom square, the assertion follows from \ref{prop:Segal}.
\end{proof}

Stacking the right-hand squares we get the following result, which is
an abstraction of the Key Lemma 5.5 of 
\cite{GalvezCarrillo-Kock-Tonks:1207.6404}.

\begin{cor}\label{cor:C2}
  We have a canonical equivalence of groupoids
  $$
  C_2 \simeq \ourX_1 \times_{\ourX_0} C_1.
  $$
\end{cor}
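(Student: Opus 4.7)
The plan is straightforward: invoke the pasting law for (homotopy) pullbacks on the right-hand column of the diagram in Lemma~\ref{lem:C1C2C1}, and then recognise the resulting pullback as the one claimed in the statement.

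First I would isolate the two right-hand squares from Lemma~\ref{lem:C1C2C1}, namely the square
$$\xymatrix{
   C_2 \drpullback \ar[r]^{d_0} \ar[d] & C_1 \ar[d] \\
   \ourX_2 \ar[r]_{d_0} & \ourX_1
}$$
and directly below it the Segal square
$$\xymatrix{
   \ourX_2 \drpullback \ar[r]^{d_0} \ar[d]_{d_2} & \ourX_1 \ar[d]^{d_1} \\
   \ourX_1 \ar[r]_{d_0} & \ourX_0   ,
}$$
both of which are (homotopy) pullbacks by the lemma. Then I stack them vertically and apply the standard pullback-pasting law (valid for homotopy pullbacks in $\Grpd$), concluding that the outer rectangle
$$\xymatrix{
   C_2 \ar[r]^{d_0} \ar[d] & C_1 \ar[d]^{d_1\circ(\,\cdot\,)} \\
   \ourX_1 \ar[r]_{d_0} & \ourX_0
}$$
is again a (homotopy) pullback, where the right vertical is the composite $C_1 \to \ourX_1 \stackrel{d_1}\to \ourX_0$ and the left vertical is $d_2$ (post-composed with the map $C_2 \to \ourX_2$).

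Finally, I rewrite this pullback in the stated form: by the definition of homotopy fibre product, the outer rectangle being a pullback is precisely the statement $C_2 \simeq \ourX_1 \times_{\ourX_0} C_1$, with the pullback taken along $d_0 : \ourX_1 \to \ourX_0$ and $d_1 \circ (C_1 \to \ourX_1) : C_1 \to \ourX_0$. The equivalence is canonical because both pullback squares in Lemma~\ref{lem:C1C2C1} come with canonical comparison maps, and pasting preserves canonicity.

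There is essentially no obstacle beyond bookkeeping; the only point requiring a bit of care is tracking which face map plays which role, so that the map $\ourX_1 \to \ourX_0$ in the final expression is $d_0$ (the ``source'' map, landing at the groupoid of arities used in the formula $\sum_k G^k \tensor g_k$) and the map $C_1 \to \ourX_0$ is the composite via $d_1$. Once that is verified, the corollary follows at once from pasting of homotopy pullbacks.
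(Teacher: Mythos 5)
Your proposal is correct and is exactly the paper's argument: the paper obtains Corollary~\ref{cor:C2} by stacking the two right-hand (homotopy) pullback squares of Lemma~\ref{lem:C1C2C1} and invoking the pasting law, with the same identification of the structure maps ($d_0:\ourX_1\to\ourX_0$ on one leg and $d_1$ composed with $C_1\to\ourX_1$ on the other). Nothing further is needed.
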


This pullback we now split into its fibres over $\ourX_0$ as 
in Lemma~\ref{lem:XSY}:

\begin{prop}\label{prop:equiv}
  We have a canonical equivalence
  $$
  C_2 \simeq \int^w (\ourX_1)_w \times {}_w(C_1) .
  $$
  This equivalence is over $\ourX_1 \times C_1$ and hence also over
  $\ourX_1 \times \ourX_1$.
\end{prop}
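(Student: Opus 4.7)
The plan is to combine Corollary~\ref{cor:C2} with the fibre-decomposition of Lemma~\ref{lem:XSY}, so the proof should be a short splicing of two results already in hand. First I would invoke Corollary~\ref{cor:C2} to identify
$$
C_2 \;\simeq\; \ourX_1 \times_{\ourX_0} C_1,
$$
where, reading off the right-hand column of the diagram in Lemma~\ref{lem:C1C2C1}, the two legs to $\ourX_0$ are $d_0 : \ourX_1 \to \ourX_0$ on the left and the composite $C_1 \xrightarrow{G} \ourX_1 \xrightarrow{d_1} \ourX_0$ on the right. Moreover this equivalence is canonically over $\ourX_1 \times C_1$, since the two outer maps of the stack are precisely the projections to the $\ourX_1$ and $C_1$ factors.

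Next I would apply Lemma~\ref{lem:XSY} to this pullback, with $X = \ourX_1$, $Y = C_1$, $S = \ourX_0$. By the conventions introduced just before Lemma~\ref{lem:C1C2C1}, the homotopy fibre $X_w$ of $d_0 : \ourX_1 \to \ourX_0$ over $w$ is exactly $(\ourX_1)_w$, while the homotopy fibre $Y_w$ of $d_1 \circ G : C_1 \to \ourX_0$ over $w$ is exactly ${}_w(C_1)$ (it is the pullback formed in the small diagram defining ${}_w(C_1)$). Hence Lemma~\ref{lem:XSY} yields
$$
\ourX_1 \times_{\ourX_0} C_1 \;\simeq\; \int^{w \in \ourX_0} (\ourX_1)_w \times {}_w(C_1),
$$
naturally over $\ourX_1 \times C_1$.

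Concatenating the two equivalences gives the claimed $C_2 \simeq \int^w (\ourX_1)_w \times {}_w(C_1)$ over $\ourX_1 \times C_1$. The upgrade to an equivalence over $\ourX_1 \times \ourX_1$ is then immediate: post-compose with $\mathrm{id}_{\ourX_1} \times G$, using that the second projection from $C_2$ to $\ourX_1$ factors through $C_1 \xrightarrow{G} \ourX_1$ (again visible from Lemma~\ref{lem:C1C2C1}).

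I do not expect any real obstacle, since both Corollary~\ref{cor:C2} and Lemma~\ref{lem:XSY} have already done the substantive work. The only thing to check carefully is the matching of the two legs of the pullback in Corollary~\ref{cor:C2} with $d_0$ and $d_1 \circ G$, so that Lemma~\ref{lem:XSY} reproduces the precise fibre groupoids $(\ourX_1)_w$ and ${}_w(C_1)$ in the statement; this is a direct reading of the stacked pullback diagram.
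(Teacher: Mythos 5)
Your proposal is correct and is exactly the paper's argument: the paper obtains Proposition~\ref{prop:equiv} by splitting the pullback $C_2\simeq \ourX_1\times_{\ourX_0}C_1$ of Corollary~\ref{cor:C2} into its fibres over $\ourX_0$ via Lemma~\ref{lem:XSY}, with the same identification of the two legs as $d_0$ and $d_1\circ G$ and hence of the fibres as $(\ourX_1)_w$ and ${}_w(C_1)$. Your closing remark about passing from $\ourX_1\times C_1$ to $\ourX_1\times\ourX_1$ along $\mathrm{id}\times G$ is likewise what the statement intends.
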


This is the essence of the Fa\`a di Bruno formula, as we proceed to explain in the following
sections.  The groupoid $C_2$ (with its map to $\ourX_1 \times \ourX_1$) is the
left-hand side of the Fa\`a di Bruno formula, and $\int^w (\ourX_1)_w \times
{}_w(C_1)$ (with its map to $\ourX_1 \times \ourX_1$) is the right-hand side.

We now analyse the groupoid $(\ourX_1)_w$, for $w\in \ourX_0 = \SSS C_0$. 
Recall that $\SSS$ is defined by the polynomial $1 \leftarrow \B' \to \B \to 1$.
For $n\in \B$, let 
$\underline n$ denote the fibre over $n$ of the projection $\B'\to\B$.
With this notation, the formula for evaluation of $\SSS$ reads
$\SSS(X) = \int^{n\in \B} \Map(\underline n,X)$.
The element $w \in \ourX_0 = \SSS C_0 = \int^{n\in \B} 
\Map(\underline n,C_0)$ thus amounts to a map $w: \underline n \to C_0$ (for 
some $n$).

\begin{lemma}\label{lem:C_1^w}
  For fixed $w : \underline n \to C_0$, we have a canonical equivalence
  $$
  (\ourX_1)_w \simeq \Map_{/C_0}(\underline n, C_1)  =: C_1^w.
  $$
  Here $\underline n$ is considered over $C_0$ via $w$, and
  $C_1$ is considered over $C_0$ via $d_0$.
\end{lemma}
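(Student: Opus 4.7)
The plan is to unravel both sides under the explicit polynomial description of $\SSS$. By construction $\ourX_i = \SSS C_i$, and the face map $d_0: \ourX_1 \to \ourX_0$ is $\SSS$ applied to $d_0 := F\lowershriek \alpha: C_1 \to C_0$. Using the formula $\SSS X \simeq \int^{n \in \B} \Map(\underline n, X)$, both $\SSS C_0$ and $\SSS C_1$ project to $\B$ by arity, and $\SSS(d_0)$ is compatible with these projections, acting on the fibre over $n$ as post-composition with $d_0: C_1 \to C_0$.

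Next, I would observe that an object $w \in \ourX_0$ is, by the polynomial formula, exactly a map $w: \underline n \to C_0$ for some $n \in \B$, lying in the fibre over $n$ of the arity projection $\SSS C_0 \to \B$. Since $\B' \to \B$ is a discrete fibration, the projections $\SSS C_i \to \B$ are (split) opfibrations, and the homotopy fibre $(\ourX_1)_w$ can therefore be computed inside the fibre over $n$: namely as the homotopy fibre of the $n$-component map $\Map(\underline n, C_1) \to \Map(\underline n, C_0)$ (post-composition with $d_0$) over $w: \underline n \to C_0$. This last homotopy fibre is by definition $\Map_{/C_0}(\underline n, C_1) = C_1^w$.

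The only real issue is justifying the reduction from a homotopy fibre in $\SSS C_0$ to one in $\Map(\underline n, C_0)$. Unpacked, an object of $(\ourX_1)_w$ is a triple $(m, x: \underline m \to C_1, \sigma: \underline m \isopil \underline n)$ with $w \circ \sigma = d_0 \circ x$; the assignment $(m, x, \sigma) \mapsto x \circ \sigma^{-1}$ gives a canonical equivalence to $\Map_{/C_0}(\underline n, C_1)$, with pseudo-inverse $y \mapsto (n, y, \Id)$, and both composites are naturally isomorphic to the identity. Alternatively, one may appeal directly to the fact that $\SSS$ is represented by a discrete fibration, which makes these fibre computations behave very strictly; this is really the content of the opfibration argument above.
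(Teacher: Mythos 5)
Your proposal is correct and follows essentially the same route as the paper's proof: identify $d_0:\ourX_1\to\ourX_0$ fibrewise over $\B$ as post-composition with $d_0:C_1\to C_0$, and compute the $w$-fibre via the standard slice-mapping-space fibre sequence $\Map_{/C_0}(\underline n,C_1)\to\Map(\underline n,C_1)\to\Map(\underline n,C_0)$. The extra justification you supply for reducing the fibre computation to the $n$-component is exactly the content the paper delegates to the general compatibility of homotopy sums with homotopy pullbacks (cf.~\ref{hosum}).
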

\begin{proof}
  The map $d_0 : \ourX_1 \to \ourX_0$ is $\SSS$ applied to 
  $d_0 : C_1 \to C_0$, and hence, by 
  definition of $\SSS$, can be written
  $$
  \int^{n\in \B} \big( \Map(\underline n,C_1) \to \Map(\underline n,C_0) \big)
  $$
  which for fixed $n$ is just post-composition with $d_0: C_1 \to C_0$.
  The $w$-fibre of $\ourX_1 \to \ourX_0$ is therefore computed by the standard 
  slice-mapping-space fibre sequence (see~\cite{Carlier-Kock}):
  \[
  \begin{gathered}
    \xymatrix{\Map_{/C_0}(w,d_0) \drpullback \ar[r]\ar[d] & \Map(\underline n,C_1)
      \ar[d]^{\text{post } d_0} \\
      1 \ar[r]_-{\name{w}} & \Map(\underline n,C_0) .}
  \end{gathered}
  \qedhere
  \]
\end{proof}

This lemma is an abstraction of Lemma~5.2 in 
\cite{GalvezCarrillo-Kock-Tonks:1207.6404}.  It can be 
interpreted as saying that an object of $(\ourX_1)_w$ is an $n$-tuple
of elements in $C_1$, with specified output colours given by the $n$-tuple
$w: \underline n \to C_0$.
In the lemma we have already introduced the notation $C_1^w$ to reflect this 
interpretation.

The interpretation so far is phrased abstractly 
in terms of $C_i$, already with a view towards the generalisations in
Section~\ref{sec:generalised}.  In fact we can unpack further, to get down
to groupoids of operations of the operad $\RRR$.  As a first step, we
unravel the square in Corollary~\ref{cor:C2}:

\begin{lemma}
  The pullback square
  $$\xymatrix{
     C_2\drpullback \ar[r]^{d_0}\ar[d]_{d_2 \circ \eta} & C_1 \ar[d]^{d_2 \circ \eta} \\
     D_1 \ar[r]_{d_0} & D_0
  }$$
  is naturally identified with the naturality square for $\phi$,
  \begin{equation}\label{eq:basicNatSquareForFdB}
    \vcenter{
    \xymatrix{
     F\lowershriek \RRR\RRR A \drpullback \ar[r]^{ F\lowershriek \RRR \alpha}
    \ar[d]_{\phi_{\RRR A}} &  F\lowershriek \RRR A \ar[d]^{\phi_A} \\
   \SSS F\lowershriek \RRR A \ar[r]_{\SSS F\lowershriek \alpha}
   & \SSS F\lowershriek A  .
  }}
  \end{equation}
\end{lemma}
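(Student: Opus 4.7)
The plan is to unravel the pullback square of Corollary~\ref{cor:C2} and identify each of its four edges with the corresponding edge of the $\phi$-naturality square at $\alpha : \RRR A \to A$. Since the objects themselves already coincide by the definitions $C_n = F\lowershriek \RRR^n A$ and $\ourX_n = \SSS F\lowershriek \RRR^n A$, the entire task reduces to computing the four structure maps in question.

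For the two horizontal maps: the bottom face map in either of the simplicial-objects-with-missing-top-face-maps comes from the algebra structure $\alpha : \RRR A \to A$. With the convention $\mu_{\RRR^{-1}A} = \alpha$, one has $d_0 : \RRR^{n+1} A \to \RRR^n A$ equal to $\RRR^n \alpha$. Applying $F\lowershriek$ to level $n=1$ identifies $d_0 : C_2 \to C_1$ with $F\lowershriek \RRR \alpha$, and applying $\SSS F\lowershriek$ at $n=0$ identifies $d_0 : \ourX_1 \to \ourX_0$ with $\SSS F\lowershriek \alpha$. These are precisely the top and bottom rows of \eqref{eq:basicNatSquareForFdB}.

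For the two vertical maps: at each level one has $d_\top = \mu^\SSS \circ \SSS(\phi)$ (the formula defining the newly introduced top face maps). Pre-composing with $\eta^\SSS$, naturality of $\eta^\SSS$ gives $\SSS(\phi) \circ \eta^\SSS = \eta^\SSS \circ \phi$, and the monad triangle identity $\mu^\SSS \circ \eta^\SSS = \Id$ collapses the composite to $\phi$. Instantiated at $\RRR A$ this identifies the left-hand vertical $C_2 \to \ourX_1$ with $\phi_{\RRR A}$; instantiated at $A$ it identifies the right-hand vertical $C_1 \to \ourX_0$ with $\phi_A$. Hence the square of Corollary~\ref{cor:C2} coincides on the nose with the naturality square of $\phi$ at $\alpha$.

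The main obstacle is really just the careful bookkeeping of conventions for the face maps of $\ourX_\bullet$ versus those of $C_\bullet$; no new input beyond the explicit formulas is needed. As a sanity check, the pullback property of the square is visible from two sides: from the top, via Corollary~\ref{cor:C2} it arises from the Segal property established in Proposition~\ref{prop:Segal} together with Lemma~\ref{lem:C1C2C1}; from below, the cartesianness of $\phi$ recalled in the set-up of Section~\ref{sec:bar} directly ensures that any naturality square of $\phi$ is a (strict, and then homotopy, by Lemma~\ref{lem:cart=hocart}) pullback. The two viewpoints must agree, and the identification above exhibits that agreement explicitly.
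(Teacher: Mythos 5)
Your proof is correct and takes essentially the same route as the paper: both reduce the vertical edges of the square to the composite $\mu^\SSS \circ \SSS(\phi) \circ \eta^\SSS$ and collapse it to $\phi$ by naturality of $\eta^\SSS$ followed by the unit law of $\SSS$ (the paper phrases this as stacking the three squares with sides $\eta$, $\SSS\phi$, $\mu$ and cancelling). Your explicit identification of the horizontal edges as $F\lowershriek\RRR\alpha$ and $\SSS F\lowershriek\alpha$ is a detail the paper leaves implicit, and it is consistent with the stated face-map conventions.
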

\begin{proof}
  The square is composed of two squares as in Lemma~\ref{lem:C1C2C1}, the bottom being 
  in turn composed of the two squares in \eqref{phi+mu}.  Altogether three 
  squares
  are stacked, with sides $\eta$, then $\SSS\phi$, then $\mu$.
  Now $\eta$ and $\SSS\phi$ can be interchanged by naturality, and then $\eta$ and 
  $\mu$ cancel out, by the unit law of $\SSS$, leaving only
  the asserted square~\eqref{eq:basicNatSquareForFdB}, naturality for $\phi$.
\end{proof}

Now we unpack further.
First note that $F\lowershriek A =I$ and $F\lowershriek \RRR A = B$, with 
reference to the polynomial $I \leftarrow E \to B \to I$ representing $\RRR$.
The effect on objects of $\phi_A : B \to \SSS I$ is to send an operation 
$b : (i_1,...,i_n) \to i$ to the input sequence $(i_1,...,i_n)$.
The effect on objects of $\SSS F\lowershriek \alpha : \SSS B \to \SSS I$ 
is to send a sequence of operations $(b_1,...,b_n)$ to the
sequence of output colours of the $b_j$. For an arbitrary object
$w = (i_1,...,i_n)$ of $\SSS I$, the homotopy fibres ${}_wB$ 
and $(\SSS B)_w$ of these functors are easily computed. 
The groupoid ${}_wB$ has
\begin{itemize}
  \item objects triples $(\rho,b,j)$ where $\rho \in \Sigma_n$ and
  $b : (i_{\rho 1},...,i_{\rho n}) \to j$ is an operation of $\RRR$.
  \item arrows $(\rho,b,j) \to (\rho',b',j')$ are $\psi \in \Sigma_n$
  such that $\rho = \rho'\psi$ and $b = b'\psi$.
\end{itemize}
Thus ${}_wB$ is a groupoid of operations with input sequence 
some permutation of $(i_1,...,i_n)$. The groupoid $(\SSS B)_w$ has
\begin{itemize}
  \item objects pairs $(\rho,(b_1,...,b_n))$ where $\rho \in \Sigma_n$ and
  $b_j \in B$ such that $tb_j = i_{\rho j}$ for $1 \leq j \leq n$.
  \item arrows $(\rho,(b_1,...,b_n)) \to (\rho',(b'_1,...,b'_n))$ are
  permutations $\psi \in \Sigma_n$ and $(\psi_1,...,\psi_n)$, such that 
  $\rho = \rho'\psi$ and $b_j = b'_{\psi(j)}\psi_j$.
\end{itemize}
Thus $(\SSS B)_w$ is a groupoid of $n$-tuples of operations 
with bijection associating output colours with $(i_1,...,i_n)$. 
As in Lemma~\ref{lem:C_1^w}, it is natural to denote $(\SSS B)_w$ as $B^w$.
The groupoid $C_2 = F\lowershriek\RRR\RRR A$ is the groupoid 
of operations of $\RRR$ labelled by operations of $\RRR$, 
described explicitly in the proof of Proposition~3.6 of \cite{Weber-OpPoly2Mnd}.
In pictorial terms, an object here 
is depicted as
\[ \xygraph{!{0;(.9,0):(0,.9)::} 
{\scriptstyle{b}} *\xycircle<6pt,6pt>{-}="p0" [ul]
{\scriptstyle{b_1}} *\xycircle<6pt,6pt>{-}="p1" [r(2)]
{\scriptstyle{b_k}} *\xycircle<6pt,6pt>{-}="p2"
"p0" (-"p1",-"p2",-[d],[u(.75)] {...},[u(.5)l(.7)] {\scriptstyle{i_1}},[u(.5)r(.75)] {\scriptstyle{i_k}},[d(.7)r(.15)] {\scriptstyle{i}})
"p1" [u(1)l(.5)] {}="q1" [r] {}="q2"
"p1" (-"q1",-"q2",[u(.75)] {...},[u(.45)l(.47)] {\scriptstyle{i_{11}}},[u(.45)r(.55)] {\scriptstyle{i_{1n_1}}})
"p2" [u(1)l(.5)] {}="r1" [r] {}="r2"
"p2" (-"r1",-"r2",[u(.75)] {...},[u(.45)l(.47)] {\scriptstyle{i_{k1}}},[u(.43)r(.55)] {\scriptstyle{i_{kn_k}}})} \]
Thus Proposition~\ref{prop:equiv} unpacks to

\begin{cor}\label{cor:B2RasHomSum}
For any operad $\RRR$, with notation as above,
\[ C_2 \simeq \int^{w \in \SSS I} B^w \times {}_wB \]
over $\SSS B \times B$.
\end{cor}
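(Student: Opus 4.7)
The plan is to derive this as a direct unpacking of Proposition~\ref{prop:equiv} in the special case where $\SSS$ is the symmetric monoidal category monad and $\RRR$ is an operad, presented by the polynomial $I \leftarrow E \to B \to I$. First I would substitute the specific identifications $C_0 = F\lowershriek A = I$ and $C_1 = F\lowershriek \RRR A = B$, which automatically give $\ourX_0 = \SSS I$ and $\ourX_1 = \SSS B$. Under these identifications, the homotopy fibre ${}_w(C_1)$ of $d_1 : \ourX_1 \to \ourX_0$ over a point $w = (i_1,\ldots,i_n)\in\SSS I$ is the groupoid ${}_wB$ already given explicitly via the triples $(\rho,b,j)$ in the bullet-point description preceding the statement; this is simply the homotopy fibre of $\phi_A : B \to \SSS I$ over $w$.

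Next I would apply Lemma~\ref{lem:C_1^w}, which identifies the fibre $(\ourX_1)_w$ of the other face map $d_0$ with $\Map_{/C_0}(\underline n, C_1)$; under the present translation this is $B^w$, exactly matching the second bullet-point description of $(\SSS B)_w$ as $n$-tuples of operations whose output colours are bijectively matched with $(i_1,\ldots,i_n)$. Plugging these two identifications into the right-hand side of Proposition~\ref{prop:equiv} yields the claimed equivalence
\[
C_2 \simeq \int^{w\in\SSS I} B^w \times {}_wB,
\]
and the fact that it lives over $\SSS B \times B = \ourX_1 \times C_1$ is inherited directly from Proposition~\ref{prop:equiv}.

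There is no real obstacle beyond bookkeeping: all the content is already contained in Proposition~\ref{prop:equiv}, Lemma~\ref{lem:C_1^w}, and the explicit descriptions of ${}_wB$ and $B^w$. The only care point is to verify that the two projections of the homotopy sum match $d_0$ and $d_1$ correctly, i.e.~that $\phi_A : B \to \SSS I$ indeed reads off input sequences (so its fibre over $w$ is ${}_wB$) and $\SSS F\lowershriek \alpha : \SSS B \to \SSS I$ reads off the tuple of output colours (so its fibre over $w$ is $B^w$); both are immediate from the definitions of $\phi$ and $\alpha$ recalled in the preceding paragraphs, so the corollary follows at once.
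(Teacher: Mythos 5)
Your proposal is correct and follows essentially the same route as the paper: specialise Proposition~\ref{prop:equiv} via $C_1=B$, $\ourX_1=\SSS B$, $\ourX_0=\SSS I$, and read off the two fibres as ${}_wB$ and $B^w$. The one ``care point'' you flag---that the relevant maps to $\SSS I$ are $\phi_A$ and $\SSS F\lowershriek\alpha$---is exactly what the paper's small intermediate lemma establishes, the only non-definitional ingredient being the cancellation $\mu^{\SSS}\circ\SSS(\phi_A)\circ\eta=\phi_A$ via naturality of $\eta$ and the unit law, so nothing essential is missing.
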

\noindent
in clear analogy with the classical Fa\`a di Bruno formula, especially in the 
uncoloured case, $I=1$, where it reduces to
$C_2 \simeq \int^{n} B^n \times {}_n B$.

%%%%%%%%%%%%%%%%%%%%%%%%%%%%%%%%%%%%%%%%%%%%%%%%%%
\section{Monoidal decomposition spaces and bialgebras}
%%%%%%%%%%%%%%%%%%%%%%%%%%%%%%%%%%%%%%%%%%%%%%%%%%

\label{sec:decomp}

We now proceed to explain the bialgebra structure (at the groupoid level) in 
which to interpret the equivalence established above.  The actual work was done 
above; we just need to apply the results of 
G\'alvez--Kock--Tonks~\cite{Galvez-Kock-Tonks:1512.07573}, and our task is only to 
explain how it works.

\begin{blanko}{Decomposition spaces and coalgebras.}
  The notion of decomposition space was introduced in 
  \cite{Galvez-Kock-Tonks:1512.07573}, as a generalisation of posets and 
  M\"obius categories for the purpose of defining incidence coalgebras.
  A decomposition space is a
  simplicial groupoid $X_\bullet: \simplexcategory\op\to\Grpd$ (in full generality 
  a simplicial $\infty$-groupoid)
  satisfying an exactness condition ensuring that the canonical span
  \begin{equation}\label{eq:span}
  X_1 \stackrel{d_1}\longleftarrow X_2 \stackrel{(d_2,d_0)}\longrightarrow X_1 
  \times X_1  
  \end{equation}
  induces a homotopy-coherently coassociative coalgebra structure on
  $\Grpd_{/X_1}$ in a sense we shall now detail.  The precise
  exactness condition is not necessary in the present work: all we need
  to know is that Segal spaces are decomposition spaces
  \cite[Proposition 3.5]{Galvez-Kock-Tonks:1512.07573}. 
  
  The idea behind the coalgebra construction is simple,
  and goes back to Leroux~\cite{Leroux:1975}: an arrow $f$ of a category
  object is comultiplied by the formula
  $$
  \Delta(f) = \sum_{b\circ a=f} a \tensor b .
  $$
  The sum is over all ways of factoring $f$ into two arrows, generalising the
  way intervals are comultiplied in the classical incidence coalgebra of a
  poset~\cite{Joni-Rota}.  The above span~\eqref{eq:span} defines a linear functor
  $$
  \Grpd_{/X_1} \stackrel{(d_2,d_0)\lowershriek \circ 
  d_1\upperstar}{\longrightarrow} \Grpd_{/X_1 \times X_1}
  $$
  which is precisely the objective version of $\Delta$: pullback to $X_2$ means
  taking all triangles with long edge $f$, and then the lowershriek means 
  returning the two short edges.  ({\em Linear functor} means that it preserves 
  homotopy sums; the linear functors are precisely those given by pull-push along
  spans~\cite{Galvez-Kock-Tonks:1602.05082}.)
  
  Similarly, the span
$$
X_1 \stackrel{s_0}{\longleftarrow} X_0 \stackrel{u}{\longrightarrow} 1 
$$
defines the linear functor
$$
  \Grpd_{/X_1} \stackrel{u\lowershriek \circ s_0\upperstar}\longrightarrow  
  \Grpd 
$$
which is the counit for $\Delta$.
 
  Altogether, $\Grpd_{/X_1}$ becomes a coalgebra object in the symmetric 
  monoidal $2$-category  $(\LIN, \tensor, \Grpd)$ \cite{Galvez-Kock-Tonks:1602.05082},
  whose objects are (weak) slices and whose
  morphisms are linear functors.  The
  monoidal structure $\tensor$ is given by the formula
  $$
  \Grpd_{/A} \tensor \Grpd_{/B} = \Grpd_{/A\times B}
  $$
  with neutral object $\Grpd_{/1}$, and the symmetry is induced by the canonical 
  symmetry of the cartesian product.
\end{blanko}

\begin{blanko}{Monoidal decomposition spaces.}
  In general, a sufficient condition for a simplicial map $f: Y_\bullet \to 
  X_\bullet$ between
  decomposition spaces to induce a coalgebra homomorphism on incidence
  coalgebras, is that $f$ be {\em CULF}~\cite{Galvez-Kock-Tonks:1512.07573},
  which stands for {\em conservative}
  (meaning that it does not invert any arrows) and {\em unique lifting of
  factorisations} (meaning that for an arrow $a \in  Y_1$, there is a one-to-one
  correspondence between the factorisations of $a$ in $Y_\bullet$ and the factorisations
  of $f(a)$ in $X_\bullet$).  A simplicial map is CULF precisely when it is 
  homotopy cartesian on
  degeneracy maps and inner face maps \cite{Galvez-Kock-Tonks:1512.07573}.
  
  Recall that a bialgebra is a monoid object in the category of coalgebras,
  meaning that multiplication and unit are homomorphisms of coalgebras.
  Accordingly, in order to induce a bialgebra structure on $\Grpd_{/X_1}$, we
  need on $X_\bullet$ a monoidal structure which is CULF.
  This motivates defining a {\em monoidal decomposition space}
  \cite{Galvez-Kock-Tonks:1512.07573} to be a decomposition space $X_\bullet$ equipped with
  a monoidal structure $ \ \tensor : \SSS X_\bullet \to X_\bullet$, whose structure maps are
  CULF. 
\end{blanko}

\begin{blanko}{Bialgebra structure on $\Grpd_{/\ourX_1}$.}
  Coming back now to the simplicial groupoid $\ourX_\bullet = \SSS F\lowershriek
  \RRR^\bullet A$, we have precisely such a symmetric monoidal structure, namely
  given by the multiplication map $\mu^\SSS : \SSS(\SSS F\lowershriek
  \RRR^\bullet A) \to \SSS F\lowershriek \RRR^\bullet A$, which is CULF 
  by Proposition~\ref{prop:D=symmoncatobj}.
  In conclusion, by Theorem~7.3 and Proposition~9.5 of \cite{Galvez-Kock-Tonks:1512.07573},
  we have:
\end{blanko}

\begin{prop}
  The slice category $\Grpd_{/\ourX_1}$ is a bialgebra object in 
  $(\LIN,\tensor,\Grpd)$.
\end{prop}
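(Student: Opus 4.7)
The plan is to assemble the proposition directly from the ingredients already verified in the excerpt, plugging them into the machinery of \cite{Galvez-Kock-Tonks:1512.07573}. The two things we must provide, in order to invoke Theorem~7.3 and Proposition~9.5 of that reference, are (a) that $\ourX_\bullet$ is a decomposition space, and (b) that $\ourX_\bullet$ carries a monoidal structure whose structure maps are CULF.

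For (a), I would first appeal to Proposition~\ref{prop:Segal}, which says that $\ourX_\bullet$ is a Segal groupoid. Since every Segal space is a decomposition space (\cite[Proposition~3.5]{Galvez-Kock-Tonks:1512.07573}), the incidence coalgebra construction of Leroux-type already makes $\Grpd_{/\ourX_1}$ into a coalgebra in $(\LIN,\tensor,\Grpd)$, with comultiplication given by the span
$$
\ourX_1 \stackrel{d_1}\longleftarrow \ourX_2 \stackrel{(d_2,d_0)}\longrightarrow \ourX_1\times\ourX_1
$$
and counit by the span through $\ourX_0$.

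For (b), I would take the monoidal structure to be $\tensor = \mu^\SSS : \SSS\ourX_\bullet \to \ourX_\bullet$, as identified in Proposition~\ref{prop:D=symmoncatobj}. That same proposition already provides the key technical content: $\tensor$ is cartesian and, on degeneracy maps and inner face maps, also homotopy cartesian. Since a simplicial map between decomposition spaces is CULF precisely when it is homotopy cartesian on degeneracies and inner face maps \cite{Galvez-Kock-Tonks:1512.07573}, we conclude that the structure maps of the symmetric monoidal structure on $\ourX_\bullet$ are CULF. Thus $(\ourX_\bullet,\tensor)$ is a monoidal (in fact symmetric monoidal) decomposition space.

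Given (a) and (b), Proposition~9.5 of \cite{Galvez-Kock-Tonks:1512.07573} applies: multiplication and unit lift to coalgebra homomorphisms, and $\Grpd_{/\ourX_1}$ acquires a bialgebra structure in $(\LIN,\tensor,\Grpd)$ with the monoid part coming from $\mu^\SSS$ and the comonoid part from the simplicial face maps. The only nontrivial verification is the CULF property of $\mu^\SSS$, and that is precisely the `homotopy cartesianness on inner faces and degeneracies' packaged in Proposition~\ref{prop:D=symmoncatobj}; I would expect this to be the one step worth flagging, even though it has already been handled upstream via Lemma~\ref{lem:cart=hocart}. No further computation is needed: the statement of the proposition follows by citation.
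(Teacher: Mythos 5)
Your proposal is correct and follows exactly the paper's own argument: the coalgebra structure comes from $\ourX_\bullet$ being a Segal groupoid (Proposition~\ref{prop:Segal}, hence a decomposition space by \cite[Proposition~3.5]{Galvez-Kock-Tonks:1512.07573}), the monoidal structure is $\mu^\SSS$, which is CULF by Proposition~\ref{prop:D=symmoncatobj}, and the conclusion is then Theorem~7.3 and Proposition~9.5 of \cite{Galvez-Kock-Tonks:1512.07573}. Nothing is missing; the one step you flag as nontrivial (CULFness of $\mu^\SSS$) is indeed the only substantive input, and it is handled upstream exactly as you describe.
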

\noindent
With the current choice of $\SSS$ it is even a `symmetric' bialgebra, meaning 
that the homotopy cardinality will be a commutative bialgebra, as we shall see 
in the next section.

We can now give the bialgebra reformulation of the equivalence in
Proposition~\ref{prop:equiv}, which is the Fa\`a di Bruno formula at
the objective level:

\begin{blanko}{Theorem.}
  \em
  We have the following equivalence in $\Grpd_{/\ourX_1\times \ourX_1}$:
  $$
  \Delta(G) = \int^w G^w \times  G_w .
  $$
\end{blanko}

%%%%%%%%%%%%%%%%%%%%%%%%%%%%%%%%%%%%%%%%%%%%%%%%%%
\section{Finiteness conditions and homotopy cardinality}
%%%%%%%%%%%%%%%%%%%%%%%%%%%%%%%%%%%%%%%%%%%%%%%%%%

\label{sec:card}

When working at the objective level of groupoid slices, the results so far hold
for {\em any} operad $\RRR$.  However, in order to take cardinality, it is
necessary to subject $\RRR$ to certain finiteness
conditions~\cite{Galvez-Kock-Tonks:1512.07577}.

In this section we explain the procedure of taking homotopy cardinality, following
\cite{Galvez-Kock-Tonks:1602.05082}.  In that paper, the setting is that
of $\infty$-groupoids, but everything works also for
$1$-groupoids~\cite{Carlier-Kock}. 

\begin{blanko}{Homotopy cardinality of groupoids and slices~\cite{Galvez-Kock-Tonks:1602.05082}.}
  A groupoid $X$ is called {\em homotopy finite} when it has only finitely many
  connected components, and all its automorphism groups are finite.  In that case,
  the homotopy cardinality of $X$ is defined as
  $$
  \norm{X} := \sum_{x\in \pi_0 X} \frac{1}{\norm{\Aut{x}}} .
  $$
  Let $\grpd$ (lowercase) denote the category of finite groupoids.

  Groupoids of combinatorial objects are {\em not} usually finite, because they
  have infinitely many components, but they usually have finite 
  automorphism groups.  Such groupoids are called {\em locally 
  finite}~\cite{Galvez-Kock-Tonks:1602.05082}.  From now on we assume that
  all groupoids are locally finite.
  For $X$ a locally finite groupoid, the basic vector space is $\Q_{\pi_0 X}$,
  the free vector space on the set of iso-classes of objects in $X$.  We denote 
  the basis elements $\delta_x$ (for $x\in \pi_0 X$). 
  Since linear combinations in a vector space are finite by definition, the 
  correct groupoid slice to consider is $\grpd_{/X}$ of finite groupoids over 
  $X$.  The cardinality of an element $A \to X$ in $\grpd_{/X}$ is the vector
  $$
  \sum_{x\in\pi_0 X} \frac{\norm{A_x} }{\norm{\Aut x}} \; \delta_x 
  \quad  \in \Q_{\pi_0 X} .
  $$
  Linear maps $\Q_{\pi_0 X} \to \Q_{\pi_0 Y}$ are modelled by linear functors 
  $\grpd_{/X} \to \grpd_{/Y}$, in turn given by spans of {\em finite type}
  $$
  X \stackrel p\leftarrow M \stackrel q\to Y ,
  $$
  meaning that $p$ is a finite map (i.e.~has homotopy-finite homotopy fibres).
  
  The notion of homotopy cardinality has all the expected properties: it takes 
  equivalences to equalities, it preserves products, sums, quotients---and hence 
  homotopy sums; it takes monoidal structures to monoids.
\end{blanko}

\begin{blanko}{Locally finite decomposition spaces.}
  For $X_\bullet$ a decomposition space, we consider the finite-groupoid slice
  $\grpd_{/X_1} \subset\Grpd_{/X_1}$, which is well behaved assuming $X_1$ is 
  locally finite.
  For the
  coalgebra structure maps $\Delta: \Grpd_{/X_1} \to \Grpd_{/X_1\times X_1}$ and
  $\varepsilon: \Grpd_{/X_1} \to \Grpd$ to descend to the finite slice
  $\grpd_{/X_1}$ it is therefore sufficient to require that
  $d_1 : X_2 \to X_1$ and $s_0: X_0 \to
  X_1$ be finite maps.  Decomposition spaces with this property (and $X_1$ 
  locally finite) are called {\em
  locally finite} \cite[\S7]{Galvez-Kock-Tonks:1512.07577} (extending the notion
  for posets \cite{Joni-Rota}).  (It may be noted that there are no conditions
  on the algebra structure: it always descends to finite-groupoid slices, since
  it is a pure lowershriek operation.)
\end{blanko}

\begin{blanko}{Locally finite operads (and monads).}
  Coming back to operads, we can ensure the local finiteness condition on
  $\ourX_\bullet$ by the following requirement.  Call a monad $\RRR$ {\em
  locally finite} when $\mu: \RRR\RRR \Rightarrow \RRR$ and $\eta:
  \Id\Rightarrow \RRR$ are finite natural transformations (i.e.~all components
  are finite maps). 
  What it amounts to is that for every operation $r$, there
  are only finitely many ways of writing it
  $$
  r = b \circ (a_1,\ldots,a_n)
  $$
  for $b$ an $n$-ary operation (and $a_i$ operations whose arities add up to 
  that of $r$).
  
  As an non-example, the commutative monoid monad $\SSS$ is
  {\em not} locally finite, because the identity operation $u \in \SSS_1$ could be
  obtained in infinitely many ways as a composition: by filling $n-1$ nullary 
  operations into an $n$-ary operation (for all $n$).  In contrast, 
  cf.~Example~\ref{ex:clas} below, the 
  commutative semimonoid monad $\SSS_+$ {\em is} locally finite, since there 
  are no nullary operations to screw things up.  Let us observe that the 
  existence of nullary operations is not formally an obstruction to being 
  locally finite, as long as they are not subject to relations.  For example,
  every free monad (on a polynomial endofunctor) is locally finite, 
  cf.~Example~\ref{ex:free} below.
  
  The local finiteness of $\RRR$ makes makes $C_\bullet$ locally finite (it is not quite
  a simplicial object, because the top face maps are missing,
  but its does feature the face and degeneracy maps on 
  which the condition is measured).  Since $F_!$ and $\SSS$ preserve finite maps,
  $\ourX_\bullet$ is also locally finite.
\end{blanko}
  
\begin{blanko}{The connected Green function as a homotopy cardinality.}
  The connected Green function lives in the completion of $\Q_{\pi_0 \ourX_1}$,
  where we allow infinite sums (but not infinite coefficients).  This
  (profinite-dimensional) vector space arises as the homotopy cardinality of the
  bigger slice $\Grpd_{/\ourX_1}^\relfin$ whose objects are finite maps $E \to 
  \ourX_1$
  (but $E$ itself not required finite).  The relevant notion of linear map
  (continuous in a certain `pro' sense) are given by spans
    $$
  X \stackrel p\leftarrow M \stackrel q\to Y
  $$
  where instead
  the right leg $q$ is finite~\cite{Galvez-Kock-Tonks:1602.05082}.
  
  In order to formulate the Main Theorem at the vector space level,
  we have to justify that the
  comultiplication extends to this bigger slice, which is to say we must check
  that $\ourX_1 \times_{\ourX_0} \ourX_1 \to \ourX_1 \times \ourX_1$ is a finite map.  This is a
  standard argument~\cite{Galvez-Kock-Tonks:1602.05082}: this map sits 
  naturally in a homotopy pullback square
  $$\xymatrix{
     \ourX_1 \times_{\ourX_0} \ourX_1 \drpullback \ar[r]\ar[d] & \ourX_0 \ar[d]^{\text{diag.}} \\
     \ourX_1 \times \ourX_1 \ar[r] & \ourX_0\times \ourX_0
  }$$
  so it is enough to show that the diagonal $\ourX_0 \to \ourX_0 \times \ourX_0$
  is finite.  It is automatically discrete (i.e.~$0$-truncated) since $\ourX_0$
  is $1$-truncated, and the homotopy fibre at a point $(w,w)$ is essentially the
  set of automorphisms of $w$, which is finite (for all $w$) if and only if
  $\ourX_0$ is locally finite.  Since we have already assumed that $\ourX_1$ is
  locally finite, and that $s_0 : \ourX_0 \to \ourX_1$ is finite, it follows
  that $\ourX_0$ is again locally finite.  Therefore, the comultiplication
  extends to the bigger slice $\Grpd_{/\ourX_1}^\relfin$ as required.  It may be
  noted that the counit does {\em not} extend to $\Grpd_{/\ourX_1}^\relfin$.
  This would require $\ourX_0\to 1$ to be a finite map, which is never the case
  since it is $\SSS$ of something.  This means that the coalgebra
  $\Grpd_{/\ourX_1}^\relfin$ is not counital.  This is not really an issue: our
  main interest is the counital coalgebra $\grpd_{/\ourX_1}$.  The extension to
  $\Grpd_{/\ourX_1}^\relfin$ is introduced only to be able to state the Fa\`a di
  Bruno formula efficiently in terms of the connected Green function---it does
  not involve the counit at all.
\end{blanko}

\begin{blanko}{Cardinality of the main equivalence.}
  We assume now that $\RRR$ is locally finite, and proceed to take homotopy
  cardinality of the main equivalence.  For clarity, we temporarily use 
  underline notation for the symbols at the algebra level.
  By construction, the family $G: C_1 \to \ourX_1$ has cardinality $\underline
  G$, and the linear functor $\Delta$ has cardinality $\underline \Delta$.
  Hence the left-hand side of the equation is clear: the cardinality of $C_2$ is
  $\underline\Delta(\underline G)$.
  
  For the right-hand side, note first that cardinality preserves homotopy sums
  (i.e.~transforms the integral into a sum over $\pi_0$ divided by symmetry 
  factors).  Now $(\ourX_1)_w \simeq C_1^w$ (as an object over $\ourX_1$) has
  cardinality $\underline G^w$, and $G_w$ has cardinality $\underline G_w$.
% 
%   
% The groupoid named $C_2$ is precisely $\Delta(G)$, by construction.
% The equivalent groupoid $\int^w (\ourX_1)_w \times {}_w(C_1)$
% is the right-hand side of the Fa\`a di Bruno formula,
% after realising that $(\ourX_1)_w$ is the $d_2$-fibre of $\ourX_0 \leftarrow \ourX_1$.
% which is $\SSS1 \leftarrow \SSS F\lowershriek 1$, so it is the length-$k$ part 
% of $\SSS F\lowershriek 1$.
% This is more precisely the groupoid of sequences of operations whose
% sequence of output colours is $k$.
% We agree  to write this as $F\lowershriek 1^k$, which is $C_1^k$, in other words $G^k$.
% JUSTIFY THIS NOTATION!
% 
  In conclusion, homotopy cardinality of the main groupoid equivalence
  yields the Fa\`a di Bruno formula at the algebraic level (where we no longer 
  write underlines):
\end{blanko}

\begin{blanko}{Theorem.}\label{thm:main} \em
  In the incidence bialgebra of a locally finite operad $\RRR$ we have
  $$
  \Delta(G) = \sum_w G^w \tensor G_w/\norm{\Aut(w)} .
  $$   
\end{blanko}

%%%%%%%%%%%%%%%%%%%%%%%%%%%%%%%%%%%%%%%%%%%%%%%%%%
\section{Generalisations}
%%%%%%%%%%%%%%%%%%%%%%%%%%%%%%%%%%%%%%%%%%%%%%%%%%

\label{sec:generalised}

So far we have treated the case of the terminal algebra for an operad.
Several generalisations follow readily by closer inspection of the 
constructions and proofs, as we proceed to explain.

\begin{blanko}{Non-discrete colours.}
  In the polynomial monads corresponding to operads, the groupoid of colours $I$
  is discrete.  We wish to give up that condition, because there are interesting
  examples with non-discrete $I$, namely operads in groupoids
  (cf.~\ref{ex:operadmonad} and \ref{ex:QFT} below).  The only place where the
  discreteness condition was used, was in the proof of
  Proposition~\ref{prop:Segal}, where it ensured that the strictly cartesian
  $2$-natural transformation $\phi: F\lowershriek \RRR \Rightarrow \SSS
  F\lowershriek$ is also homotopy cartesian.  But in fact we needed this only
  for naturality squares on arrows of the type $F\lowershriek \alpha$ (and
  $\SSS$ applied to those).  If instead we require $F\lowershriek \alpha$ to be
  a fibration, we can draw the same conclusion and establish
  Proposition~\ref{prop:Segal} again, without discreteness assumptions.
  $F\lowershriek \alpha$ is precisely
  the map of groupoids $B \to I$ in the diagram defining $\RRR$, so we impose
  now the condition that this is a fibration.  (Note that it is always a
  fibration when $I$ is discrete.)
\end{blanko}

\begin{blanko}{Algebras.}\label{algebras}
  The second generalisation is to note that to set up the Segal space 
  $\ourX_\bullet$, we did not rely on the algebra $\alpha: \RRR A \to A$
  being the terminal algebra.  In fact, all the arguments in the construction
  work exactly the same for any (strict) $\RRR$-algebra.  We should still
  require, though, that $F\lowershriek \alpha$ is a fibration of groupoids
  (or that $I$ is discrete).
\end{blanko}

\begin{blanko}{General $\SSS$.}
  So far we have assumed that $\SSS$ is the symmetric monoidal category
  monad, so that monads $\RRR$ over it correspond to operads (and operads in
  groupoids).  In fact, all the results generalise readily to any general finitary
  polynomial monad $\SSS$ (although for simplicity, we assume that the
  maps in the representing polynomial diagram are fibrations):
  the general situation concerns any cartesian monad morphism between polynomial
  monads
  $$
  \RRR\stackrel F \Rightarrow \SSS
  $$
  and leads to $\RRR$-algebras internal to categorical $\SSS$-algebras (as developed by
  \cite{Baez-Dolan:9702, Batanin:0207281, Batanin-Berger:1305.0086, Weber-OperadsInMonPsAlgebras, Weber-CodescCrIntCat}).
  The fibrancy condition on $(A,\alpha)$ is still required, of course.

  One thing that changes when $\SSS$ is no longer the symmetric monoidal category monad
  is that the category object $\ourX_\bullet$ is no longer symmetric monoidal but is instead
  a categorical $\SSS$-algebra, now for the general $\SSS$. 
  Each choice of $\SSS$ will give a new ambient setting, and a new notion of 
  connectedness: the connected Green function will always be given by the unit of 
  the monad.
 
  \bigskip
  
  Since the proofs in Section~\ref{sec:operads} did not actually use other 
  properties of the monad $\SSS$ than being polynomial, cartesian and homotopy cartesian,
  we get immediately in this more general situation:
\end{blanko}

\begin{prop}
  $\Grpd_{/\ourX_1}$ is an $\SSS$-algebra in $\kat{Coalg}(\LIN)$.
\end{prop}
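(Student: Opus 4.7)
The plan is to run the argument of Section~\ref{sec:decomp} with ``symmetric monoidal category monad'' replaced throughout by ``arbitrary polynomial cartesian monad $\SSS$'' (subject to the standing fibrancy hypotheses of~\ref{algebras}), checking that each intermediate property survives the generalisation. Concretely, this means exhibiting a categorical $\SSS$-algebra structure on $\ourX_\bullet$ whose structure map is CULF, and then applying the incidence-coalgebra functor.

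First I would verify that $\ourX_\bullet := \SSS F\lowershriek \RRR^\bullet A$ is a categorical $\SSS$-algebra in $\Grpd$, with structure map $\mu^\SSS : \SSS \ourX_\bullet \to \ourX_\bullet$. Levelwise this is tautological, since each $\ourX_n$ is a free $\SSS$-algebra; that the face and degeneracy maps are $\SSS$-equivariant follows from naturality, as they are built from the unit and multiplication of $\SSS$, from $\SSS$ applied to other maps, and from the cartesian natural transformation $\phi$. The Segal property (Proposition~\ref{prop:Segal}) carries over verbatim, since its proof only invokes Lemma~\ref{lem:cart=hocart}, whose hypotheses are now being assumed for $\SSS$.

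Next I would establish the analogue of Proposition~\ref{prop:D=symmoncatobj}, showing that the structure map $\SSS\ourX_\bullet \to \ourX_\bullet$ is CULF, i.e.~cartesian and homotopy cartesian on degeneracies and inner face maps. Cartesianness comes for free from the cartesianness of the multiplication of the polynomial monad $\SSS$; homotopy cartesianness on the relevant face and degeneracy maps is another application of Lemma~\ref{lem:cart=hocart}. This is the one step where the fibrancy assumptions on $\SSS$ genuinely enter, and it is where I expect any subtlety to lie: one must inspect each naturality square appearing in the simplicial structure maps to confirm that it has the shape required by the lemma.

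The proposition then follows by applying the functor $X_\bullet \mapsto \Grpd_{/X_1}$ to the categorical $\SSS$-algebra $\ourX_\bullet$. By Theorem~7.3 of \cite{Galvez-Kock-Tonks:1512.07573} this functor sends decomposition spaces to coalgebras in $\LIN$ and CULF simplicial maps to coalgebra homomorphisms; under the identification $\Grpd_{/A \times B} \catequiv \Grpd_{/A} \tensor \Grpd_{/B}$ it transports the $\SSS$-algebra structure on $\ourX_\bullet$ into an $\SSS$-algebra structure on $\Grpd_{/\ourX_1}$ in $\kat{Coalg}(\LIN)$, as required. In the special case where $\SSS$ is the symmetric monoidal category monad, this recovers the bialgebra statement previously given.
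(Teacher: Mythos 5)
Your proposal is correct and follows essentially the same route as the paper: the paper's justification is precisely the observation that the arguments of Sections~\ref{sec:operads}--\ref{sec:decomp} (the bar construction being a categorical $\SSS$-algebra with CULF structure maps via cartesianness of $\mu^\SSS$ and Lemma~\ref{lem:cart=hocart}, followed by the results of \cite{Galvez-Kock-Tonks:1512.07573}) use only that $\SSS$ is polynomial, cartesian and homotopy cartesian, together with the standing fibrancy hypotheses. You have simply made explicit the steps the paper compresses into that one remark.
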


Accordingly, taking 
cardinality will yield an $\SSS$-algebra object in the category of 
coalgebras, or, equivalently, a comonoid in the category of $\SSS$-algebras.

In the operad case, the setting for the Fa\`a di Bruno formula is a free 
algebra (or more precisely a power-series ring), and it is straightforward
to interpret the exponent $w$ in the left-hand factor $G^w$: it is simply
$G$ multiplied with itself $w$ times, where in reality $w$ represents the
shape of a list of things to be multiplied.
For general $\SSS$, the exponent $w$ represents the shape of an $\SSS$-configuration,
and $G^w$
denotes such a configuration of elements in $G$, which can be 
multiplied using the (free) $\SSS$-algebra structure.  The precise meaning of 
the exponent is given in \ref{lem:C_1^w}, and hopefully the examples 
will clarify this point.

\bigskip

The most obvious alternative to the symmetric monoidal category monad
is to take $\SSS$ to be the (nonsymmetric) monoidal category monad, which amounts
to considering nonsymmetric operads.  The outcome are noncommutative 
bialgebras,
and some kind of noncommutative Fa\`a di Bruno formula.  See Example~\ref{ex:LN}
below.

%%%%%%%%%%%%%%%%%%%%%%%%%%%%%%%%%%%%%%%%%%%%%%%%%%
\section{Examples}
%%%%%%%%%%%%%%%%%%%%%%%%%%%%%%%%%%%%%%%%%%%%%%%%%%

\label{sec:ex}

In the first few examples we maintain as  
$\SSS$ the symmetric monoidal category monad.
Hence the bialgebras will be free commutative.
We also keep a discrete groupoid of colours.

\begin{blanko}{Classical Fa\`a di Bruno.}\label{ex:clas}
  Take $\RRR$ to be $\SSS_+$, the polynomial monad for commutative semimonoids.
  It is like $\SSS$, but omitting the nullary operation.  It is the terminal
  reduced operad $\mathsf{Comm}_+$ (the word `positive' is also used instead of
  `reduced'~\cite{Aguiar-Mahajan}).  One can check that the bar category
  $\ourX_\bullet$ is then the fat nerve (see 
  \cite[2.14]{Galvez-Kock-Tonks:1512.07573}) of the category 
  of finite sets and 
  surjections. Indeed, $\ourX_0$ is $\SSS 1$, which is equivalent to 
  the groupoid of finite sets and bijections---this is also degree $0$ of
  the fat nerve. Next, $\ourX_1$ is $\SSS\SSS_+ 1$,
  the groupoid of symmetric lists of nonempty symmetric lists of $1$s, 
  naturally equivalent with the groupoid of surjections, degree $1$ of the fat 
  nerve.  The same identifications work for general $n$.  This fat nerve is
  symmetric monoidal under disjoint union, and the resulting bialgebra is the classical
  Fa\`a di Bruno bialgebra (this observation goes back to \cite{Joyal:1981}; see
  \cite{GKT:ex} for an elaboration), and the resulting Fa\`a di Bruno formula is
  the classical one.
\end{blanko}

\begin{blanko}{Multivariate Fa\`a di Bruno.}\label{ex:multivar}
  We consider the multi-variate version of the previous example.  Let $I$ be a
  set of colours.  Let $\RRR$ be the operad whose $n$-ary operations are
  $(n+1)$-tuples of colours (and without nullary operations).  The symmetries
  are colour-preserving bijections respecting the base point.  One may think of
  these operations as corollas whose leaves and root are decorated with elements
  in $I$.  The substitution operation takes a two-level trees with decorated
  edges and contracts the inner edges, simply forgetting their colours.  Note
  that since the inner-edge colours are simply lost in this process, many
  substitutions give the same result.  For this reason, in order for $\RRR$ to be
  locally finite we must demand $I$ to be a finite set.
  
  The bar construction of $\RRR$ is the category object in groupoids with $\ourX_0$
  the free groupoid on $I$, that is, tuples of colours and colour-preserving
  bijections.  $\ourX_1$ is the groupoid whose elements are coloured surjections,
  and whose arrows are pairs of colour-preserving bijections. We point out that
  this Segal groupoid is not Rezk
  complete (see \cite{Galvez-Kock-Tonks:1512.07577} for discussion of this 
  issue): for any two colours $i$ and $j$ there is a unary operation $i \to
  j$, with inverse $j \to i$.  These invertible unaries do not come from 
  $\ourX_0$,
  which only contains colour preserving bijections.
  
  The bialgebra resulting from $\ourX_\bullet$ is the polynomial algebra 
  generated by symbols $A_{w,i}$ with $i\in I$ and $w$ a nonempty word in $I$, one 
  generator for each iso-class of connected coloured surjections.  Closely 
  related to the non-Rezk-ness of $\ourX_\bullet$ is the fact that the identity
  surjections are not group-like.  Indeed, we have
  $$
  \Delta(A_{i,i}) = \sum_{j\in I} A_{i,j} \tensor A_{j,i}
  $$
  expressing that the identity $i \to i$ admits factorisations $i \to j \to i$.
  This bialgebra is precisely the dual to composition of multi-variate power 
  series; more precisely $I$-tuples of power series in $I$-many 
  variables.  (The linear (unary) part of this substitution is simply matrix 
  multiplication, and we recognise indeed the above formula as dual to the 
  formula for the $i$th diagonal entry in a matrix product.)
\end{blanko}

\begin{blanko}{Algebras.}
  Continuing the case $\RRR=\SSS_+$,
  let $A$ be an $\SSS_+$-algebra, i.e.~a commutative semimonoidal groupoid.
  As in the two previous 
  examples, the resulting bar construction $\ourX_\bullet$ is a certain groupoid-enriched 
  category of decorated surjections, but this time the decorations on each corolla are
  by elements in $A$ such that the output colour is the semimonoid product of
  all the input colours.  In particular, there are no unary operations $i \to j$
  for $i\neq j$, and it follows readily that $\ourX_\bullet$ is Rezk complete
  this time, just as in \ref{ex:clas}.
  
  The coalgebra structure does not change much from that in
  \ref{ex:clas}, because only the underlying surjection really matters: the
  possible decorations in the factorisations are completely determined by the
  decorations of the original surjection.
\end{blanko}

In the next three examples, we give up the requirement that $I$ is discrete.

\begin{blanko}{Free operads (Fa\`a di Bruno for 
  trees~\cite{GalvezCarrillo-Kock-Tonks:1207.6404}).}\label{ex:free}
  Let $P$ be any finitary polynomial endofunctor, and let $\RRR$ denote the 
  free monad  on $P$.  Its operations are the $P$-trees.  Assuming that the 
  groupoid $P1$ is 
  locally finite, also the groupoid of $P$-trees is locally finite, and the 
  factorisations of operations amount to cuts in trees (as in 
  \cite{Kock:1109.5785}), and since a given tree admits only finitely many 
  cuts, it follows that $\RRR$ is locally finite.
  The resulting bialgebra is the
  $P$-tree version (\cite{Kock:1109.5785}) of the Butcher--Connes--Kreimer Hopf
  algebra~\cite{Connes-Kreimer:9808042} (but note that it is a bialgebra not a
  Hopf algebra: it fails to be connected for the node grading,
  because all the nodeless forests are of
  degree $0$).  The corresponding Fa\`a di Bruno formula is that from
  \cite{GalvezCarrillo-Kock-Tonks:1207.6404}.  
  
  The {\em core} of a $P$-tree is the combinatorial tree obtained by forgetting
  the $P$-decoration and shaving off leaves and root~\cite{Kock:1109.5785}.
  Taking core defines a bialgebra homomorphism from the bialgebra of $P$-trees
  to the usual Butcher--Connes--Kreimer Hopf algebra, thereby producing Fa\`a
  di Bruno sub-Hopf algebras, including virtually all the ones of Bergbauer
  and Kreimer~\cite{Bergbauer-Kreimer:0506190} (but not those of
  Foissy~\cite{Foissy:0707.1204}, whose grading is of a different nature); (see
  \cite{Kock:1512.03027} for these results).
\end{blanko}

In the next example, it is interesting also to consider arbitrary algebras.

\begin{blanko}{Node substitution in trees: the monad for operads.}\label{ex:operadmonad}
  Symmetric operads are themselves algebras for a polynomial monad $\RRR$; it
  is given by
  $$
  \B \leftarrow \T\upperstar \to \T \to \B
  $$
  where (as usual) $\B$ is the groupoid of finite sets and bijections, 
  $\T$ is groupoid of rooted (operadic) trees, and $\T\upperstar $ is
  the groupoid of rooted (operadic) trees with a marked node.
  The map $\B \leftarrow \T\upperstar $ returns the set of incoming
  edges of the marked node, $\T\upperstar \to\T$ forgets the mark, and
  $\T\to\B$ returns the set of leaves.  An $\RRR$-algebra is thus a
  map $A \to \B$ (which should be required to be a discrete fibration
  in order to get the usual notion of $\Set$-operad), whose (homotopy)
  fibre over $n \in \B$ is thought of as the set of $n$-ary operations.
  $\RRR A$ is the groupoid of trees whose nodes are decorated by
  operations in $A$, and the monad structure $\alpha: \RRR A \to A$
  gives precisely the operad substitution law, prescribing how to
  contract a whole tree configuration of operations to a single operation.
  
  To ensure local finiteness of $\RRR$, one should exclude the trivial
  tree, and the resulting notion of operad is then that of non-unital operad.
  
  The incidence bialgebra of $\RRR$, corresponding to the terminal operad,
  is the slice $\Grpd_{/\T}$ but with a comultiplication
  different from that in Example~\ref{ex:free}: a tree $t$ is comultiplied
  $$
  \Delta(t) = \sum_{\text{subtree covers}} \textstyle{\prod_i s_i} \tensor q
  $$
  by summing over all ways to cover the tree with subtrees $s_i$, disjoint on 
  nodes, then interpreting
  those subtrees as a forest $\prod_i s_i$  (the left-hand tensor factor), and 
  contracting each subtree $s_i$ to a corolla to obtain a quotient tree $q$
  (the right-hand tensor factor).  
  
  Taking cores constitutes a bialgebra homomorphism to the Hopf algebra of
  trees of 
  Calaque--Ebrahimi-Fard--Manchon~\cite{Calaque-EbrahimiFard-Manchon:0806.2238},
  which is of interest since it governs substitution of Butcher series in
  numerical analysis~\cite{Chartier-Hairer-Vilmart:FCM2010}.  Our general result now gives a Fa\`a di Bruno formula in
  the incidence bialgebra, and via the core homomorphism a Fa\`a di Bruno
  sub-bialgebra in the Calaque--Ebrahimi-Fard--Manchon Hopf algebra, which 
  seems not to have been noticed before.

  For general $\RRR$-algebras $A$, i.e.~operads, the constructions and descriptions are the
  same, except that the trees are now $A$-trees, i.e.~trees decorated with 
  $A$-operations on the nodes.  The bialgebra has the same description again,
  but it should be noted that contracting a subtree to a corolla,
  and still obtain an operation to decorate the resulting node with, involves 
  the operad structure of $A$.
\end{blanko}

\begin{blanko}{Feynman graphs.}\label{ex:QFT}
  The 1PI connected Feynman graphs for a given quantum field theory form the
  operations of an operad in groupoids~\cite{Kock:graphs-and-trees}, which we 
  now take as $\RRR$: let $I$
  denote the groupoid of interaction labels (connected graphs without internal
  lines), let $\G$ denote the groupoid of all 1PI connected Feynman graphs with
  residue in $I$, and let $\G\upperstar$ denote the groupoid of all such graphs, but with a
  marked vertex.  The polynomial representing the operad $\RRR$ is
  $$
  I\stackrel s\leftarrow \G\upperstar \stackrel p\to \G \stackrel t \to I
  $$
  where $s$ returns the marked vertex, $p$ forgets the marking, and $t$ returns
  the residue of the graph.  The monad multiplication is given by substitution
  of graphs into vertices.  The resulting bialgebra is the bialgebra version
  \cite{Kock:1411.3098} of the Connes--Kreimer Hopf algebra of
  graphs~\cite{Connes-Kreimer:9912092}.  The connected Green function is the
  standard (bare) combinatorial Green function in quantum field
  theory~\cite{Bergbauer-Kreimer:0506190}, and the Fa\`a di Bruno formula is a
  non-renormalised version of the formula of van
  Suijlekom~\cite{vanSuijlekom:0807}.
\end{blanko}

We now pass to the noncommutative setting: $\SSS$ is now the
monoidal category monad, so that polynomial monads over it are nonsymmetric
operads.  The resulting bialgebras at the vector-space level are now free
noncommutative.

\begin{blanko}{The noncommutative Fa\`a di Bruno bialgebra.}\label{ex:LN}
  Let $\RRR$ be the reduced part of $\SSS$, i.e.~the semimonoid monad.
  The polynomials representing these monads are
    $$\xymatrix{
     1 \ar[d] & \ar[l] \N'_{>0}\drpullback \ar[r]\ar[d] & \N_{>0}\ar[d]\ar[r] & 
     1 \ar[d] \\
     1 & \ar[l] \N'\ar[r] & \N\ar[r] & 1 
  }$$
  where the set of natural numbers $\N$ is a skeleton of the groupoid of finite 
  ordered sets and monotone bijections, and $\N'$ is a skeleton of the groupoid
  of finite pointed ordered sets and basepoint-preserving monotone bijections.
  Hence the fibre of $\N'\to\N$ over $n$ is the linear order $\underline n$.
  
  The
  bar construction is the fat nerve of the category of linearly ordered finite
  sets and monotone surjections.  Since linearly ordered sets have no
  automorphisms, this is equivalent to the strict nerve of any skeleton of the
  category.  The bialgebra in this case 
  is the (dual) Landweber--Novikov bialgebra from algebraic
  topology (see for example~\cite{Morava:Oaxtepec}), which is called the noncommutative
  Fa\`a di Bruno bialgebra by Brouder, Frabetti and 
  Krattenthaler~\cite{Brouder-Frabetti-Krattenthaler:0406117}.
  It is also isomorphic (over $\Q$) to the Dynkin--Fa\`a di Bruno bialgebra,
  introduced in the theory of numerical integration on manifolds by
  Munthe-Kaas~\cite{MuntheKaas:BIT95}; see also
  \cite{EbraihimiFard-Lundervold-Manchon:1402.4761}.
  In the connected Green function and in the Fa\`a di Bruno formula,
  the homotopy sum $\int^k$ is now an ordinary
  sum $\sum_k$ since there are no symmetries present.  The formula 
  $$
  \Delta(A) = \sum_k A^k \tensor A_k
  $$
  is precisely the noncommutative Fa\`a di Bruno 
  formula of \cite{Brouder-Frabetti-Krattenthaler:0406117}, modulo the shift
  in indexing mentioned in \ref{rmk:grading}.
  
  One may consider also a multivariate version of this noncommutative Fa\`a di 
  Bruno bialgebra, proceeding as in \ref{ex:multivar}, but without symmetries.
  The case of two variables is treated in \cite{Brouder-Frabetti-Krattenthaler:0406117}.
\end{blanko}

\begin{blanko}{Braided operads.}
  Take $\SSS$ to be the braided monoidal category monad, which is
  polynomial, represented by the classifying space of the braid groups.
  Taking $\RRR$ to be the commutative monoid monad (denoted $\SSS$ in
  the earlier sections), we obtain a braided version of the Fa\`a di Bruno
  bialgebra, corresponding to the simplicial groupoid $\ourX_\bullet$
  whose codescent object is the category of vines~\cite{Weber-CodescCrIntCat}.
  The braiding is only manifest at the groupoid level though, as taking
  cardinality turns the braided monoidal structure into a commutative monoid.
\end{blanko}

To finish we consider a few silly examples of $\SSS$ just to illustrate 
different instances of the Fa\`a di Bruno formula in certain degenerate 
situations.

\begin{blanko}{Two rather trivial examples.}
  Let $\SSS$ be the identity monad.  A monad cartesian over $\SSS$ is then precisely 
  a small category. The finiteness condition is then for that category to be 
  locally finite.
  
  The $\SSS$-algebra structure is void, so the result of
  the construction is the usual incidence coalgebra of the
  category~\cite{Galvez-Kock-Tonks:1512.07573}.
  The connected Green function $G$ is then simply the sum of all arrows 
  (everything is connected).  We have $G = \sum_k g_k$ where $k$ runs over all
  the objects and $g_k$ denotes the set of arrows with domain $k$.
  The Fa\`a di Bruno formula reads
  $$
  \Delta(G) = \sum_k G^k \tensor g_k .
  $$
  Here $G^k$ is the groupoid of $1$-tuples of arrows with codomain $k$.
  
  In the special case where $\RRR$ is just a monoid (i.e.~a category with only one 
  object), then the finiteness condition amounts to the finite-decomposition 
  property of Cartier--Foata~\cite{Cartier-Foata}, and 
  the Fa\`a di Bruno formula reduces to
  $$
  \Delta(G)= G \tensor G
  $$
  ---the connected Green function is group-like in this case.

  As a slight elaboration on this example, 
  take $\SSS$ to be the pointed set monad.  This will
  yield a pointed comonoid.  The resulting Fa\`a di Bruno formula has only two
  terms.
  
  Let's take the one-object case.  A monad over $\SSS$ is a monoid $M$ together
  with a left module $E$.  One can think of this as an operad with only nullary
  operations $E$ and unary operations $M$.  The generators for the pointed
  coalgebra is the set $\ourX_1=1+E+M$.  We have $\Delta(1) = 1 \tensor 1$, and for
  each $e\in E$ we have $\Delta(e)= 1\tensor e + \sum_{x m = e} x \tensor
  m$ (with $x\in E$ and $m\in M$).  Finally for $a\in M$ we have $\Delta(a) =
  \sum_{nm=a} n \tensor m$.
  
  Inside $\ourX_1$, the connected Green function is given by $G=E+M 
  \subset 1+E+M$.  We have
  $$
  \Delta(G) = 1 \tensor E + G \tensor M  .
  $$
  In a tree interpretation, this says that the
  `cuts' are either empty-followed-by-nullary
  or anything-followed-by-unary.
\end{blanko}

\noindent {\bf Acknowledgements.} We are indebted to Imma G\'alvez and Andy
Tonks for many discussions on the Fa\`a di Bruno formula over the years---we 
owe of course a lot to 
\cite{GalvezCarrillo-Kock-Tonks:1207.6404}--\cite{GKT:ex}% 
% and \cite{Galvez-Kock-Tonks:1512.07573}
---and
more specifically for help with Example~\ref{ex:LN}. 
J.K.~was supported by grant number MTM2013-42293-P of Spain.
M.W.~acknowledges the support of the Australian Research Council grant No.~DP130101172, and the Institute of Mathematics of the Czech Academy of Sciences.

% \bibliographystyle{scplain}
% \bibliography{../fdb-bib}

\end{document}